\theoremstyle{plain}
\newtheorem{thm}{Theorem}
\newtheorem{lemma}[thm]{Lemma}
\newtheorem{cor}[thm]{Corollary}
\newtheorem{prop}[thm]{Proposition}
\newtheorem{conj}[thm]{Conjecture}
\newtheorem{observation}{Observation}
\newtheorem*{ques*}{Question}
\title{Results on the Small Quasi-Kernel Conjecture}
\author{
Jiangdong Ai\thanks{Department of Computer Science. Royal Holloway University of London.  {\tt Jiangdong.Ai.2018@live.rhul.ac.uk}.} \and Stefanie Gerke\thanks{Department of Mathematics. Royal Holloway University of London.  {\tt stefanie.gerke@rhul.ac.uk}.} \and Gregory Gutin \thanks{Department of Computer Science. Royal Holloway University of London. {\tt g.gutin@rhul.ac.uk}.} \and Anders Yeo \thanks {Department of Mathematics and Computer Science, University of Southern Denmark. {\tt andersyeo@gmail.com}.} \and Yacong Zhou\thanks{Department of Computer Science. Royal Holloway University of London. {\tt Yacong.Zhou.2021@live.rhul.ac.uk}.} }
\begin{document}
\maketitle

 \begin{abstract}
A {\em quasi-kernel} of a digraph $D$ is an independent set $Q\subseteq V(D)$ such that for every  vertex $v\in V(D)\backslash Q$, there exists a directed path with one or two arcs from $v$ to a vertex $u\in Q$. In 1974, Chv\'{a}tal and Lov\'{a}sz proved that every digraph has a quasi-kernel. In 1976, Erd\H{o}s and S\'zekely conjectured that every sink-free digraph $D=(V(D),A(D))$ has a quasi-kernel of size at most $|V(D)|/2$.
In this paper, we give a new method to show that the conjecture holds for a generalization of anti-claw-free digraphs. For any sink-free one-way split digraph $D$ of order $n$, when $n\geq 3$, we show a stronger result that $D$ has a quasi-kernel of size at most $\frac{n+3}{2} - \sqrt{n}$, and the bound is sharp.
\end{abstract}

\section{Introduction}\label{sec:intro}

Let $D=(V(D),A(D))$ be a digraph. A {\em kernel} of a digraph $D$ is an independent set $K\subseteq V(D)$ such that for every vertex $v\in V(D)\backslash K$, there exists an arc from $v$ to a vertex $u\in K$. A {\em quasi-kernel} of a digraph $D$ is an independent set $Q\subseteq V(D)$ such that for every  vertex $v\in V(D)\backslash Q$, there exists a directed path with one or two arcs from $v$ to a vertex $u\in Q$.  Note that not every digraph has a kernel. For instance, odd cycles do not have a  kernel. However, Chv\'{a}tal and Lov\'{a}sz \cite{CL} proved that every digraph has a quasi-kernel, and since then, quasi-kernels have been studied in a number of papers, see e.g. \cite{C,GKTY,HH,H,JM,KLS,LMRV}.

If $xy\in A(D)$ then we say that $y$ is an {\em out-neighbour} of $x$, and $x$ is an {\em in-neighbour} of $y$. A {\em sink} is a vertex that has no out-neighbour in $D$ and a {\em source} is a vertex that has no in-neighbour in $D$. A digraph $D$ is said to be {\em sink-free} if it has no sink. In 1976, Erd\H{o}s and S\'zekely (see \cite{ES}) made the following conjecture on the size of quasi-kernels in sink-free digraphs.

\begin{conj}\label{conj1}\cite{ES}\label{conj}
Every sink-free digraph $D=(V(D),A(D))$ has a quasi-kernel of size at most $|V(D)|/2$.
\end{conj}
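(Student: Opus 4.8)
The plan is to prove Conjecture~\ref{conj1} by induction on $n = |V(D)|$, using the condensation of $D$ into strongly connected components to peel the problem down to a structured core, and then controlling the size of the quasi-kernel produced by the Chv\'{a}tal--Lov\'{a}sz argument by means of a charging scheme.

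First I would dispose of the trivial reductions. If $D$ is not weakly connected then a quasi-kernel of $D$ is a disjoint union of quasi-kernels of its components, each again sink-free, and $\sum_i n_i/2 = n/2$, so one may assume $D$ weakly connected. Form the condensation $\hat D$, the acyclic digraph whose vertices are the strongly connected components of $D$; since $D$ is loopless and sink-free, every component that is a sink of $\hat D$ contains at least two vertices. I would then build $Q$ component by component, visiting the components of $\hat D$ in reverse topological order (sinks of $\hat D$ first), maintaining the invariant that after each component is processed the number of vertices already committed to $Q$ is at most half the number of vertices already ``settled'' --- committed to $Q$, or dominated within two arcs by the committed part of $Q$. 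When a component $C$ is reached, all components it can reach along directed paths of length at most two in $D$ are already processed, so either every vertex of $C$ already has such a path into the committed $Q$ (and $C$ contributes nothing, improving the ratio), or we are forced to place a nonempty independent subset of $C$ into $Q$, and we must then charge to that subset at least as many newly settled vertices.

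The content of the charging step is essentially a statement about a single strongly connected digraph: every strongly connected digraph $H$ on $m \ge 2$ vertices has a quasi-kernel of size at most $m/2$, with enough flexibility in its choice to interact correctly with the components already processed (the vertices of $H$ that get ``exported'' as dominators, and the vertices of $H$ dominated from outside, can be prescribed within limits). I would attempt this by a second induction on $m$: delete a vertex $x$ of minimum out-degree, recurse on the (possibly several) strongly connected pieces of $H - x$ --- using the Chv\'{a}tal--Lov\'{a}sz theorem for existence and the inductive bound for size --- and then repair the bounded damage near $x$, using that $x$ has both an out-neighbour and an in-neighbour in $H$. The accounting would be carried by the potential $2|Q| - (\text{number of settled vertices})$, which one needs to show is non-increasing at every step of both inductions.

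The step I expect to be the true obstacle, and the reason the conjecture has stood open since 1976, is the base of the inner induction: a strongly connected digraph carrying no structure one can exploit --- no large semicomplete part, no bounded independence number, no split partition. For such $H$ I would abandon induction in favour of an extremal analysis: let $H$ be a minimum counterexample, deduce that it is ``irreducible'' (no vertex deletable without creating a sink or disconnecting, all in-neighbourhoods small, no two vertices with the same in-neighbourhood, and so on), and then either exhibit a small quasi-kernel by hand or derive a contradiction from a fractional weighting of $V(H)$. If this irreducible case cannot be settled in full generality --- which is precisely where the difficulty lies --- the fallback is to isolate the structural hypothesis on $H$ that makes the charging scheme close and to verify it for restricted families; for one-way split digraphs (hence, as a special case, anti-claw-free digraphs) the split partition reduces the charging to an explicit matching between each $Q$-vertex and a private vertex it dominates, which is the route pursued in this paper.
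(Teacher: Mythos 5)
The statement you are trying to prove is Conjecture~\ref{conj1}, which is an \emph{open} problem (Erd\H{o}s--S\'zekely, 1976); the paper does not prove it, it only verifies it for restricted classes (anti-claw-free digraphs, digraphs with good quasi-kernels, the Kostochka--Luo--Shan setting, one-way split digraphs). Your proposal likewise does not prove it: you explicitly concede that the base case of your inner induction --- a strongly connected digraph with no exploitable structure --- is ``the true obstacle'' and you offer only a fallback to restricted families. That is not a gap in a proof; it is the entire content of the conjecture. In fact your intermediate claim, that every strongly connected digraph on $m\ge 2$ vertices has a quasi-kernel of size at most $m/2$ \emph{with prescribable interaction at its boundary}, is at least as strong as what is to be shown, so the reduction buys nothing: you have moved the difficulty, not diminished it.

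Beyond that, the sketched machinery has concrete problems. Processing strongly connected components in reverse topological order does not preserve independence of $Q$: a vertex you later commit to $Q$ in a component $C'$ may have an arc into a $Q$-vertex already committed in a previously processed component $C$ (arcs of $D$ do go from $C'$ to $C$), so you must delete earlier $Q$-vertices, and it is exactly this deletion/repair step that destroys the potential $2|Q|-(\text{settled})$ accounting --- the Chv\'atal--Lov\'asz argument survives such repairs only because it makes no cardinality claim. Your inner induction (delete a minimum out-degree vertex $x$, recurse, ``repair the bounded damage near $x$'') has the same unquantified repair step. Finally, the closing aside is wrong in both directions: anti-claw-free digraphs are not a special case of one-way split digraphs (a vertex of $Y$ with three in-neighbours in the independent set $X$ is an induced anti-claw), nor conversely (tournaments are anti-claw-free but not one-way split in general); the paper treats these two classes by genuinely different arguments --- a matching-based counting set-up with Observations~\ref{obs0}--\ref{obs1} for the anti-claw-free case (Theorem~\ref{thm:claw}), and an averaging/Jensen argument over a semicomplete auxiliary digraph for one-way split digraphs (Theorem~\ref{thm:split}).
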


Following \cite{ES}, we will call a quasi-kernel {\em small} if its size is at most $|V(D)|/2$. There are a number of papers with results relevant to Conjecture \ref{conj}, cf. \cite{GKTY,HH,H,KLS}. Recently, there was a renewed interest in this conjecture. Kostochka et al. \cite{KLS} proved that Conjecture \ref{conj} holds for a class of digraphs including digraphs of chromatic number at most $4$ (the chromatic number of a digraph $D$ is the same as that of the underlying undirected graph of $D$), see Section \ref{sec:4color} for more details. Recently, van Hulst \cite{H} showed that Conjecture \ref{conj} holds for all digraphs containing kernels. Heard and Huang \cite{HH} showed that each sink-free digraph $D$ has two disjoint quasi-kernels if $D$ is semicomplete multipartite, quasi-transitive, or locally semicomplete. As a consequence, Conjecture \ref{conj} is true for these three classes of digraphs. Note that not every sink-free digraph has a pair of disjoint quasi-kernels \cite{GKTY}.

In this paper we continue to investigate which classes of digraphs satisfy Conjecture \ref{conj}. Our set-up then allows us to give short proofs for Kostochka et al.'s  and van Hulst's results and we include them even though they follow similar lines.
An induced subgraph $C$ of an undirected graph $G$ is called a {\em claw} if $C$ is isomorphic to $K_{1,3}.$ An undirected graph $G$ is {\em claw-free} if it does not have a claw. Claw-free graphs are widely studied; see for example the  surveys \cite{CS,FFR} and the references therein.
Similarly, an induced subgraph $C$ of a digraph $D$ is called a {\em (directed) claw} ({\em (directed) anti-claw}) if $C$ is isomorphic to $\vec{K}_{1,3}$ ($\vec{K}_{3,1}$, respectively) depicted in Figures \ref{fig:1} and \ref{fig:2}, respectively.
\begin{figure}
	\centering
	\begin{minipage}{.4\textwidth}
		\centering
		\begin{tikzpicture}[scale=1.7]
 \fill (0,0) circle (0.025);
\fill (-1,0) circle (0.025);
\fill (-1,1) circle (0.025);
\fill (-1,-1) circle (0.025);
\draw[thick] (-1,0)--(-0.5,0);
\draw[thick] (-1,1)--(-0.5,0.5);
\draw[thick] (-1,-1)--(-0.5,-0.5);
\draw [thick, arrows = {-Stealth[reversed, reversed]}] (0,0)--(-0.5,0);
\draw [thick, arrows = {-Stealth[reversed, reversed]}] (0,0)--(-0.5,0.5);
\draw [thick, arrows = {-Stealth[reversed, reversed]}] (0,0)--(-0.5,-0.5);
		\end{tikzpicture}
		\caption{$\vec{K}_{1,3}$}\label{fig:1}
	\end{minipage}%
	\begin{minipage}{.4\textwidth}
		\centering
		\begin{tikzpicture}[scale=1.7]
 \fill (0,0) circle (0.025);
\fill (-1,0) circle (0.025);
\fill (-1,1) circle (0.025);
\fill (-1,-1) circle (0.025);
\draw[thick] (-0.5,0)--(0,0);
\draw[thick] (-0.5,0.5)--(0,0);
\draw[thick] (-0.5,-0.5)--(0,0);
\draw [thick, arrows = {-Stealth[reversed, reversed]}] (-1,0)--(-0.5,0);
\draw [thick, arrows = {-Stealth[reversed, reversed]}] (-1,1)--(-0.5,0.5);
\draw [thick, arrows = {-Stealth[reversed, reversed]}] (-1,-1)--(-0.5,-0.5);
		\end{tikzpicture}
		\caption{$\vec{K}_{3,1}$}\label{fig:2}
	\end{minipage}
\end{figure}

We say that $D$ is {\em anti-claw-free} if $D$ contains no anti-claw. In Section \ref{sec:cl}, we prove that every sink-free anti-claw-free digraph has a small quasi-kernel. Note that tournaments are anti-claw-free and so it is easy to construct wide classes of anti-claw-free with large chromatic number. Thus, our result can be viewed as ''orthogonal'' to that by Kostochka et al. \cite{KLS}.

A quasi-kernel $Q$ of a digraph $D$ is {\em good} if for each $u\in Q$, there is an arc from $u$ to a vertex which is an in-neighbour of a vertex in $Q$. In Section \ref{sec:good}, we prove that if a digraph $D$ has a good quasi-kernel, then $D$ has a small quasi-kernel.
Note that every kernel in a sink-free digraph is also a good quasi-kernel and thus our result generalises that of van Hulst \cite{H} and has a shorter proof.
For completeness we also exhibit an infinite set of kernel-free digraphs which have good quasi-kernels.

A {\em split graph} is a graph in which vertices can be partitioned into a clique and an independent set. Split graphs are one of the basic classes of perfect graphs and a superset of the threshold graphs, see \cite{MP}. In 2012, LaMar \cite{LaMar} introduced split digraphs as a generalization of split graphs and characterized split digraphs by their degree sequences using Fulkerson inequalities \cite{Fulkerson}. Here, we consider a special case of split digraphs, namely one-way split digraphs. A digraph $D$ is called a {\em one-way split digraph}, if its vertex set can be partitioned into $X$ and $Y$, such that $X$ induces an independent set and $Y$ induces a semicomplete digraph (a digraph in which there is at least one arc between every pair of vertices) and any arcs between $X$ and $Y$ go from $X$ to $Y$. Note that all vertices in $X$ are sources. In Section \ref{sec:odsg}, we prove that if $D$ is a one-way split digraph of order $n$ with no sinks, then $D$ has a quasi-kernel of size at most $\frac{n+3}{2} - \sqrt{n}$. We also construct, for infinitely many values of $n$, one-way split digraphs of order $n$ such that the minimum size of their quasi-kernels is $\frac{n+3}{2} - \sqrt{n}$.

\paragraph{Additional terminology and notation.}
For any subset $S\subseteq V(D)$, the {\em in-neighbourhood} of $S$ is defined as follows:
$$N^-(S)=\{x\in V(D)\backslash S: \exists y\in S, (x,y)\in A(D)\}.$$ And the {\em closed in-neighbourhood} of $S$ is defined as $N^-[S]=N^-(S)\cup S$.
We use $N^{--}(S)$ to denote the in-neighbourhood of $N^-[S]$, i.e., $N^{--}(S)=N^-(N^-[S])$. In other words, $N^-(S)$ is the set of vertices with distance one to $S$, and $N^{--}(S)$ is the set of vertices with distance exactly two to $S$. Note that every quasi-kernel $Q$ gives rise to  a partition of $V(D)$ into $Q$, $N^{-}(Q)$ and $N^{--}(Q)$.

For any subset $S$ of $V(D)$, we use $D[S]$ to denote the subgraph induced by $S$, and $D-S$ the subgraph induced by $V(D)\backslash S$.
A digraph $D$ is {\em bipartite} if there is a partition $U,W$ of its vertex set such that no arc has both end-points in either $U$ or $W$; $U$ and $W$ are {\em partite sets} of $D.$
Let $D$ be a bipartite digraph with partite sets $U$ and $W$. An arc set $M$ of $D$ is {\em matching from $U$ to $W$} if $M$ is a matching in the underlying undirected graph of $D$
and all arcs of $M$ are directed from $U$ to $W.$

\section{Small Kernels in Anti-claw-free Digraphs}\label{sec:cl}

Our proofs use the following set-up. Let $Q$ be a  quasi-kernel in a sink-free digraph $D$. We want to find a one-to-one correspondence between $V(D)\backslash Q$ and $Q$ which saturates $Q$ to show that $|Q|\leq |V(D)|/2$. To do so we start with a maximum matching $M$ from $N^{-}(Q)$ to $Q$. Let $M_1$
be the subset of $Q$ covered by $M$, $M_2$ the subset of $N^-(Q)$ covered by $M$, and $A= Q\setminus M_1$, see Figure~\ref{fig:3}.
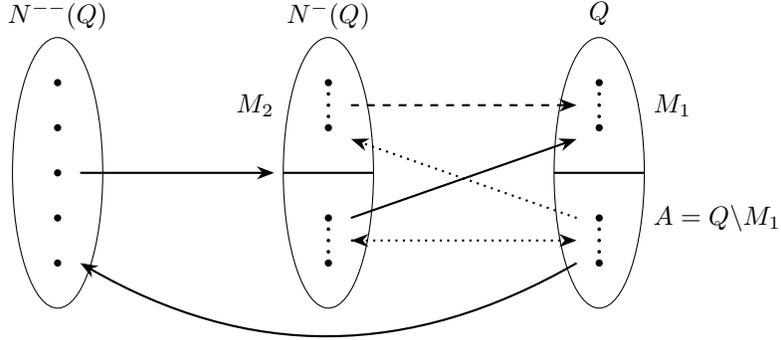
\begin{figure}
	\centering
	\begin{tikzpicture}[scale=0.6]
	\fill (2,4) circle (0.08);
	\fill (2,3) circle (0.08);
	\fill (2,2) circle (0.08);
	\fill (2,1) circle (0.08);
	\fill (2,0) circle (0.08);
	\fill (8,4) circle (0.08);
	\fill (8,3.25) circle (0.04);
	\fill (8,3.5) circle (0.04);
	\fill (8,3.75) circle (0.04);
	\fill (8,3) circle (0.08);
	\fill (8,1) circle (0.08);
	\fill (8,0) circle (0.08);
	\fill (14,4) circle (0.08);
	\fill (14,3.25) circle (0.04);
	\fill (14,3.5) circle (0.04);
	\fill (14,3.75) circle (0.04);
	\fill (14,3) circle (0.08);
	\fill (14,1) circle (0.08);
	\fill (14,0) circle (0.08);
	\fill (14,0.25) circle (0.04);
	\fill (14,0.5) circle (0.04);
	\fill (14,0.75) circle (0.04);
	\fill (8,0.25) circle (0.04);
	\fill (8,0.5) circle (0.04);
	\fill (8,0.75) circle (0.04);
	\draw [black] (2,2) ellipse (1 and 3);
	\draw [black] (8,2) ellipse (1 and 3);
	\draw [black] (14,2) ellipse (1 and 3);
	\node [right] at (15,3.5) {$M_1$};
	\node [right] at (15,1) {$A=Q\backslash M_1$};
	\node [left] at (7,3.5) {$M_2$};
	\node [below] at (2,6) {$N^{--}(Q)$};
	\node [below] at (8,6) {$N^-(Q)$};
	\node [below] at (14,6) {$Q$};
	\draw [thick] (7,2)--(9,2);
	\draw [thick] (13,2)--(15,2);
	\draw [thick, arrows = {-Stealth[reversed, reversed]}] (2.5,2)--(6.8,2);
	\draw [thick, dashed, arrows = {-Stealth[reversed, reversed]}] (8.5,3.5)--(13.5,3.5);
	\draw [thick, dotted, arrows = {-Stealth[reversed, reversed]}] (13.5,1)--(8.5,2.75);
	\draw [thick, arrows = {-Stealth[reversed, reversed]}] (8.5,1)--(13.5,2.75);
	\draw [thick, dotted, arrows = {-Stealth[reversed, reversed]}] (8.5,0.5)--(13.5,0.5);
	\draw [thick, dotted, arrows = {-Stealth[reversed, reversed]}] (8.51,0.5)--(8.5,0.5);
	\draw [thick, arrows = {-Stealth[reversed, reversed]}] (13.5,0) to [bend left] (2.5,0);
	
	\end{tikzpicture}
	\caption{A minimal quasi-kernel of $D$. Both $Q$ and $N^-(Q)$ have been partitioned into two parts. The dotted arcs indicate there are no arcs from one part to another. Ordinary arcs indicate that each vertex in the first part has an out-neighbour in the second part, while the dashed arc indicates the matching $M$. }\label{fig:3}
\end{figure}
 Note that the maximality of $M$ implies that $N^-(Q)= N^-(M_1)$. Thus, we have the following observation.
\begin{observation}\label{obs0} $M_1$ is a quasi-kernel of $D[V(D)\setminus A]$.
\end{observation}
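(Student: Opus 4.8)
The plan is to verify directly that $M_1$ meets the two defining requirements of a quasi-kernel inside $D' = D[V(D)\setminus A]$: that it is independent, and that every other vertex of $D'$ reaches $M_1$ along a directed path with one or two arcs that stays in $D'$. Independence is immediate, since $M_1\subseteq Q$ and $Q$ is independent in $D$, hence in any induced subdigraph. For the reachability condition, the first step is to identify which vertices must be routed: because $A = Q\setminus M_1$ and $M_1\subseteq Q$, we have $V(D')\setminus M_1 = V(D)\setminus(A\cup M_1) = V(D)\setminus Q = N^-(Q)\cup N^{--}(Q)$, so it suffices to send every vertex of $N^-(Q)$ and of $N^{--}(Q)$ into $M_1$ using at most two arcs avoiding $A$.

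The routing itself uses only the already-noted consequence $N^-(Q) = N^-(M_1)$ of the maximality of $M$. For $v\in N^-(Q)$: since $N^-(Q) = N^-(M_1)$, the vertex $v$ has an out-neighbour $u\in M_1\subseteq V(D')$, so the single arc $vu$ is a valid length-one path in $D'$. For $v\in N^{--}(Q)$: by definition of $N^{--}$ there is an out-neighbour $w$ of $v$ with $w\in N^-(Q)$, and applying $N^-(Q)=N^-(M_1)$ to $w$ produces an out-neighbour $u\in M_1$ of $w$, giving the length-two path $v\to w\to u$ ending in $M_1$.

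The only point that genuinely needs attention is that this length-two path survives in the induced subdigraph $D'$, i.e. that neither internal vertex lies in $A$. Here we use $A\subseteq Q$: the intermediate vertex $w$ lies in $N^-(Q)$, which is disjoint from $Q$ and therefore from $A$, while $u\in M_1$ is disjoint from $A$ by the definition of $A$; hence both arcs $vw$ and $wu$ remain present in $D[V(D)\setminus A]$. This completes the check that $M_1$ is a quasi-kernel of $D'$. I do not expect a real obstacle: the argument is a short unwinding of the definitions of $N^-$, $N^{--}$, and maximum matching, and the purpose of the observation is to enable the induction-style reduction (pass from $D$ and $Q$ to $D'=D[V(D)\setminus A]$ and $M_1$) that drives the rest of the section.
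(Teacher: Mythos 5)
Your proof is correct and follows essentially the same route as the paper: the paper's justification is precisely the identity $N^-(Q)=N^-(M_1)$ coming from the maximality of $M$, from which every vertex of $N^-(Q)\cup N^{--}(Q)$ reaches $M_1$ in at most two arcs, with the intermediate vertex lying in $N^-(Q)$ and hence outside $A$. Your write-up just spells out these routing and induced-subgraph details that the paper leaves implicit.
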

In other words,  every vertex outside $A$ can reach $M_1$ by a directed path with at most two arcs. {Moreover, no vertex has an arc into $A$ that does not have an arc into $M_1$ which leads us to the following observation.}
\begin{observation}
If $Q$ is a quasi-kernel in a sink-free digraph, then deleting any subset of vertices of A yields a sink-free digraph.
\end{observation}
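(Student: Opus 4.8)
The plan is to use the partition of $V(D)$ into $Q$, $N^-(Q)$ and $N^{--}(Q)$ together with the maximality of the matching $M$. Fix an arbitrary $S\subseteq A$ and an arbitrary vertex $v\in V(D)\setminus S$; I will exhibit an out-neighbour of $v$ that lies in $V(D)\setminus S$, which proves that $v$ is not a sink of $D-S$. Since $A\subseteq Q$, we have $S\cap N^-(Q)=\emptyset$ and $S\cap N^{--}(Q)=\emptyset$, and since $A=Q\setminus M_1$ we also have $S\cap M_1=\emptyset$; these three disjointness facts are all that the argument will use.

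Now I would split into three cases according to which block of the partition contains $v$. If $v\in Q$ (this includes both $v\in M_1$ and $v\in A\setminus S$), then $v$ has some out-neighbour $w$ because $D$ is sink-free, and $w\notin Q$ because $Q$ is independent, hence $w\notin S$. If $v\in N^-(Q)$, then by the maximality of $M$ we have $N^-(Q)=N^-(M_1)$ (the identity recorded just before Observation~\ref{obs0}), so $v$ has an out-neighbour $w\in M_1$, and $w\notin S$ since $S\cap M_1=\emptyset$. Finally, if $v\in N^{--}(Q)$, then by definition $v$ has an out-neighbour $w\in N^-[Q]=Q\cup N^-(Q)$; but $v\notin N^-(Q)$ forces $v$ to have no out-neighbour in $Q$, so $w\in N^-(Q)$, and again $w\notin S$. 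In every case $v$ has an out-neighbour in $D-S$, so $D-S$ is sink-free.

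I do not expect a genuine obstacle here, since this is essentially a structural bookkeeping statement; the only point that requires a little care is the $N^{--}(Q)$ case, where one must invoke $v\notin N^-(Q)$ — and not merely $v\notin Q$ — to guarantee that the out-neighbour $w$ falls into $N^-(Q)$ rather than into $Q$ (and hence possibly into the deleted set $A$). The remaining cases are immediate from the independence of $Q$, the sink-freeness of $D$, and the equality $N^-(Q)=N^-(M_1)$.
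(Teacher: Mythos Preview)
Your proof is correct and rests on the same identity the paper uses, namely $N^-(Q)=N^-(M_1)$. The paper condenses the argument into a single sentence---``no vertex has an arc into $A$ that does not have an arc into $M_1$''---from which sink-freeness of $D-S$ is immediate: if a vertex loses its only out-neighbour upon deleting $S\subseteq A$, that out-neighbour was in $A$, hence the vertex lies in $N^-(Q)=N^-(M_1)$ and so retains an out-neighbour in $M_1\subseteq V(D)\setminus S$. Your three-case split just makes this explicit; Cases~1 and~3 treat vertices that had no out-neighbour in $A$ to begin with (by independence of $Q$, respectively by $v\notin N^-(Q)$), while Case~2 is exactly the paper's one-line argument.
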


If there is an arc from a vertex $v\in A$  to $N^{-}(Q)$ then we can remove $v$ from $Q$ to obtain a smaller quasi-kernel as $v$ is in the second in-neighbourhood of a vertex in $M_1$. Thus, we obtain the following observation.
\begin{observation} \label{obs1}  If $Q$ is a minimal quasi-kernel, then there is no arc from $A$ to $N^{-}(Q)$.
\end{observation}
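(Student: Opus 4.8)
I would argue by contradiction: suppose $Q$ is a minimal quasi-kernel but there is an arc $vw\in A(D)$ with $v\in A$ and $w\in N^-(Q)$, and show that $Q':=Q\setminus\{v\}$ is again a quasi-kernel, contradicting minimality. Since $Q'\subseteq Q$ it is independent, so the only thing to verify is that every vertex of $V(D)\setminus Q'$ reaches $Q'$ by a directed path with one or two arcs.

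The key tool is the identity $N^-(Q)=N^-(M_1)$ noted just before Observation~\ref{obs0}, which comes from the maximality of $M$: every vertex of $N^-(Q)$ has an out-neighbour in $M_1$. Because $v\in A=Q\setminus M_1$, we have $M_1\subseteq Q'$, so in fact \emph{every vertex of $N^-(Q)$ has an out-neighbour in $Q'$}. This is what lets us reroute around the deleted vertex.

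Now I would check $V(D)\setminus Q'$ case by case. For $v$ itself: $w$ is an out-neighbour of $v$ lying in $N^-(Q)$, so $w$ has an out-neighbour $u\in M_1\subseteq Q'$; the vertices $v,w,u$ are pairwise distinct (since $w\notin Q$ while $v,u\in Q$, and $u\in M_1$ while $v\in A$), so $v\to w\to u$ is a path of length two to $Q'$. For any $x\in V(D)\setminus Q$: there is a path of length at most two from $x$ to $Q$; if it avoids $v$ it already ends in $Q'$; if it is the single arc $x\to v$ then $x\in N^-(Q)$ and hence has an out-neighbour in $Q'$; if it is $x\to y\to v$ then $y\in N^-(Q)$ has an out-neighbour $z\in Q'$, giving the path $x\to y\to z$. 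In every case $x$ reaches $Q'$, so $Q'$ is a quasi-kernel smaller than $Q$, the desired contradiction.

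I do not expect a real obstacle here: the statement is essentially the formalisation of the one-line remark preceding it. The only point needing care is the bookkeeping in the last case — confirming that any vertex whose unique short route to $Q$ passed through $v$ can be redirected — and this is precisely guaranteed by $N^-(Q)=N^-(M_1)$, together with the observation that $v\notin M_1$.
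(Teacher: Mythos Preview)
Your argument is correct and is precisely the paper's approach, just written out in full: the paper's one-line justification (``we can remove $v$ from $Q$ to obtain a smaller quasi-kernel as $v$ is in the second in-neighbourhood of a vertex in $M_1$'') is exactly your contradiction via $Q'=Q\setminus\{v\}$, and your case analysis simply unpacks why $Q'$ is still a quasi-kernel using $N^-(Q)=N^-(M_1)$.
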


Now we are ready to confirm the conjecture for anti-claw-free digraphs.

 \begin{thm} \label{thm:claw}
Every sink-free digraph with no anti-claw has a small quasi-kernel.
\end{thm}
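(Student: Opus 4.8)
The plan is to take a minimal quasi-kernel $Q$ and, using the setup already developed, construct an injection from $Q$ into $V(D)\setminus Q$. By Observations~\ref{obs0}--\ref{obs1} we have the partition into $Q$, $N^-(Q)$, $N^{--}(Q)$, a maximum matching $M$ from $N^-(Q)$ to $Q$ covering $M_1\subseteq Q$ and $M_2\subseteq N^-(Q)$, and $A=Q\setminus M_1$ with no arcs from $A$ to $N^-(Q)$. The matching $M$ already pairs each vertex of $M_1$ with a distinct vertex of $N^-(Q)$, so it suffices to inject $A$ into $(N^-(Q)\setminus M_2)\cup N^{--}(Q)$, i.e. into the unused vertices. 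Since $D$ is sink-free, every $a\in A$ has an out-neighbour; because $Q$ is independent and there are no arcs from $A$ to $N^-(Q)$, that out-neighbour lies in $N^{--}(Q)$. So each $a\in A$ sends an arc to $N^{--}(Q)$.

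The key step is to show one can choose these targets injectively, and here is where anti-claw-freeness enters. Suppose instead that no matching from $A$ to $N^{--}(Q)$ saturates $A$; by König/Hall there is a set $A'\subseteq A$ whose entire out-neighbourhood $N^+(A')\cap N^{--}(Q)$ has size $<|A'|$. The idea is that some single vertex $w\in N^{--}(Q)$ must then receive arcs from three distinct vertices $a_1,a_2,a_3\in A$ (a more careful counting argument, or iterating a deficiency version of Hall, will be needed to extract exactly three rather than just two). Now $a_1,a_2,a_3$ are pairwise non-adjacent (they lie in the independent set $Q$), and each has an arc to $w$: that is precisely an induced copy of $\vec{K}_{3,1}$ — an anti-claw — contradicting the hypothesis. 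One has to be slightly careful that the subgraph on $\{a_1,a_2,a_3,w\}$ is genuinely induced as $\vec{K}_{3,1}$, but $a_i$'s have no arcs among themselves, and any arc from $w$ back to some $a_i$ would put $a_i$ into $N^{--}(Q)$'s out-neighbourhood issues — actually it would make $w\in N^-(Q)$, contradicting $w\in N^{--}(Q)$; so no such back-arc exists and the copy is induced.

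With a matching $M'$ from $A$ into $N^{--}(Q)$ saturating $A$, the union $M\cup M'$ (after noting $M$ covers $M_1$ via $M_2\subseteq N^-(Q)$ and $M'$ covers $A$ via vertices of $N^{--}(Q)$, two disjoint target regions) gives an injection from $Q=M_1\cup A$ into $V(D)\setminus Q$. Hence $|Q|\le |V(D)\setminus Q|$, i.e. $|Q|\le |V(D)|/2$, so $Q$ is small.

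The main obstacle I expect is the combinatorial extraction of a \emph{three}-fold collision rather than merely a two-fold one: a naive Hall-deficiency argument only forces two vertices of $A$ to share a common out-neighbour, which yields only an induced $\vec{K}_{2,1}$, not an anti-claw. Overcoming this likely requires either (i) a cleverer choice — e.g. first greedily matching as much of $A$ as possible and analyzing an augmenting-path obstruction, which typically exposes a vertex of small out-degree in a way that forces the triple — or (ii) strengthening the structural claim, for instance showing each $a\in A$ actually has at least two out-neighbours in $N^{--}(Q)$ (using sink-freeness plus minimality more aggressively), after which a deficiency-$1$ Hall failure already produces the required three-to-two collision and hence the anti-claw.
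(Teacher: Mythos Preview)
You have correctly isolated the obstacle, and it is a genuine gap: neither of your proposed fixes (i) or (ii) goes through. A vertex $a\in A$ can perfectly well have a single out-neighbour in $N^{--}(Q)$, so (ii) fails; and a Hall-deficiency/augmenting-path analysis will in general only produce a two-fold collision in $A$, never forcing three.

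The missing idea is that the third in-neighbour does not come from $A$ at all---it comes from $M_1$. Once you have $w\in N^{--}(Q)$ with two in-neighbours $a_1,a_2\in A$, argue by minimality of $Q$ that $w$ must also have an in-neighbour in $M_1$: if not, every in-neighbour of $w$ lying in $Q$ is in $A$, and then $(Q\setminus N_Q^-(w))\cup\{w\}$ is a strictly smaller quasi-kernel (it is independent because $w\in N^{--}(Q)$ has no out-arc to $Q$, and it still absorbs everything because $M_1$ is untouched and the removed vertices of $A$ now reach $w$). So $w$ has three in-neighbours in $Q$---two in $A$, one in $M_1$---all independent, with no back-arc from $w$, giving the induced $\vec{K}_{3,1}$.

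Incidentally, the matching/Hall apparatus is unnecessary. If $Q$ is a minimum quasi-kernel and not small, then $|N^-(Q)|+|N^{--}(Q)|<|Q|=|M_1|+|A|$ with $|M_1|\le |N^-(Q)|$, so $|N^{--}(Q)|<|A|$ outright; plain pigeonhole then already hands you the vertex $w$ with two in-neighbours in $A$.
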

\begin{proof}
Assume there is a sink-free and anti-claw-free digraph without a small quasi-kernel. Let $Q$ be the smallest quasi-kernel and consider a maximal matching $M$ from $N^{-}(Q)$ to $Q$. Recall that $N^{--}(Q)$ is the set of vertices that have a path with two arcs into $Q$ but are not in $N^{-}[Q]$. As $Q$ is not small $|N^{-}(Q)|+|N^{--}(Q)| <|Q|$ and in particular
\begin{equation}\label{eq1}
|N^{--}(Q)|<|A|
 \end{equation}  We have seen in Observation~\ref{obs1} that all out-neighbours of vertices in $A$ must be in $N^{--}(Q)$ (since the digraph is sink-free, every vertex in $A$ must have at least one out-neighbour).  $N^{--}(Q)$ is smaller than $A$ and thus there must exist a vertex  $v\in N^{--}(Q)$ that has (at least) two in-neighbours in $A$. Note that $v$ cannot have an out-neighbour in $Q$ as it is not in the first out-neighbourhood of $Q$. Now $v$ must have an in-neighbour in $M_1$ since otherwise we could replace the in-neighbourhood of $v$ in $Q$ by $v$ to obtain a smaller quasi-kernel. Thus, $v$ and its three in-neighbours form an  induced anti-claw.
\end{proof}

Now, let's consider the second in-neighbourhood of $Q$, or more precisely the set $N^{--}(Q)$ of all vertices that have a path with two arcs into $Q$ but are neither in $N^{-}(Q)$ nor in $Q$.
Let $\tilde{Q}$ be a quasi-kernel of $D[N^{--}(Q)]$. Note that there are no arcs from $\tilde{Q}$ to $Q$ (as $\tilde{Q}$ is in the second in-neighbourhood of $Q$). Hence,  if we add $\tilde{Q}$ to $Q$ and remove $N^{-}(\tilde{Q})$, we obtain an independent set $Q'$. In fact, $Q'$ is a quasi-kernel of $D$ as every vertex in $N^{-}(Q)$ either has an out-neighbour in $Q\setminus N^-(\tilde{Q})$ or has an out-neighbour in $N^-(\tilde{Q})$ and so has a path of length two to a vertex in $\tilde{Q}$. This way of constructing a different quasi-kernel from the original one was first found by Jacob and Meyniel \cite{JM}.

\begin{observation} \label{obs:N2}\cite{JM}
Let $Q$ be a quasi-kernel of a digraph $D$, and let $\tilde{Q}$ be a quasi-kernel of $D[N^{--}(Q)]$. Then
$(Q \cup \tilde{Q})\setminus N^-(\tilde{Q})$ is a quasi-kernel of $D$.
\end{observation}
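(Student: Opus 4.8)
The plan is to verify directly that $Q' = (Q \cup \tilde{Q})\setminus N^-(\tilde{Q})$ is both independent and dominating in the quasi-kernel sense, exactly along the lines already sketched in the paragraph preceding the observation. First I would establish that there are no arcs between $\tilde{Q}$ and $Q$ in either direction: by definition $\tilde{Q} \subseteq N^{--}(Q)$, so no vertex of $\tilde{Q}$ lies in $N^-(Q)$, hence no arc goes from $\tilde{Q}$ to $Q$; and since $N^{--}(Q)$ is disjoint from $N^-(Q)$, no vertex of $Q$ can have an out-neighbour in $N^{--}(Q) \supseteq \tilde{Q}$, so no arc goes from $Q$ to $\tilde{Q}$ either. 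Combined with the independence of $Q$ (within $D$) and of $\tilde{Q}$ (within $D[N^{--}(Q)]$, hence within $D$), this shows $Q \cup \tilde{Q}$ is independent, and therefore so is its subset $Q'$.

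Next I would check the reachability condition. Take any $v \in V(D) \setminus Q'$. The vertex set $V(D)$ partitions as $Q \,\dot\cup\, N^-(Q) \,\dot\cup\, N^{--}(Q)$. If $v \in N^{--}(Q)$: either $v \in \tilde{Q} \subseteq Q'$ (excluded), or $v \in N^{--}(Q)\setminus \tilde{Q}$, in which case, since $\tilde{Q}$ is a quasi-kernel of $D[N^{--}(Q)]$, there is a path of at most two arcs inside $D[N^{--}(Q)]$ from $v$ to $\tilde{Q} \subseteq Q'$. If $v \in Q \setminus Q'$, then $v \in N^-(\tilde{Q})$, so $v$ has an out-neighbour in $\tilde{Q} \subseteq Q'$, a path of length one. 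If $v \in N^-(Q)$, then $v$ has an out-neighbour $w \in Q$; if $w \notin N^-(\tilde{Q})$ then $w \in Q'$ and we are done with a path of length one; if $w \in N^-(\tilde{Q})$ then $w$ has an out-neighbour in $\tilde{Q} \subseteq Q'$, giving a path $v \to w \to \tilde{Q}$ of length two. In every case $v$ reaches $Q'$ by a directed path with one or two arcs, so $Q'$ is a quasi-kernel.

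I do not anticipate a genuine obstacle here — the statement is essentially a bookkeeping argument and the excerpt already contains the substance of the proof. The only point requiring a little care is the case analysis for $v \in N^-(Q)$: one must be sure that removing $N^-(\tilde{Q})$ from $Q$ never strands a vertex of $N^-(Q)$, and the resolution is precisely that such a stranded vertex's sole former out-neighbours in $Q$, now deleted, lie in $N^-(\tilde{Q})$ and hence themselves reach $\tilde{Q}$ in one step, restoring a length-two path. A minor subtlety worth stating explicitly is that $Q' \cap N^{--}(Q) = \tilde{Q}$ and $Q' \cap Q = Q \setminus N^-(\tilde{Q})$ are the two pieces of $Q'$, which is what makes the partition-based case analysis exhaustive.
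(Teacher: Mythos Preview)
Your overall approach is the same as the paper's, and the reachability case analysis is correct. There is, however, a genuine slip in your independence argument. You claim that ``since $N^{--}(Q)$ is disjoint from $N^-(Q)$, no vertex of $Q$ can have an out-neighbour in $N^{--}(Q)\supseteq \tilde{Q}$.'' This inference is wrong: $N^-(Q)$ consists of vertices with an arc \emph{into} $Q$, so the disjointness of $N^{--}(Q)$ and $N^-(Q)$ says nothing about arcs \emph{from} $Q$ into $N^{--}(Q)$. Concretely, take $V=\{q,a,b\}$ with arcs $a\to q$, $b\to a$, $q\to b$: then $\{q\}$ is a quasi-kernel, $N^{--}(\{q\})=\{b\}$, and there is an arc $q\to b$ from $Q$ into $N^{--}(Q)$.

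The fix is the one implicit in the paper's own sketch: you do not need the (false) claim that no arc goes from $Q$ to $\tilde{Q}$. If $q\in Q$ has an arc to some $t\in\tilde{Q}$, then $q\in N^-(\tilde{Q})$ and is therefore removed when forming $Q'=(Q\cup\tilde{Q})\setminus N^-(\tilde{Q})$. Together with the (correct) observation that there are no arcs from $\tilde{Q}$ to $Q$ and that $Q$ and $\tilde{Q}$ are each independent, this already gives independence of $Q'$. With this correction your proof goes through and coincides with the paper's argument.
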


We can now strengthen Theorem~\ref{thm:claw} by excluding $\vec{K}_{4,1}$ and $\vec{K}_{4,1}^+$ instead of an anti-claw, see Figures \ref{fig:41} and \ref{fig:41+}. Note that  $\vec{K}_{4,1}$ and $\vec{K}_{4,1}^+$ both include an anti-claw and thus Theorem \ref{4claw} is indeed stronger than
Theorem~\ref{thm:claw}.

\begin{figure}
	\centering
	\begin{minipage}{.5\textwidth}
		\centering
		\begin{tikzpicture}[scale=1.7]
 \fill (1,0) circle (0.025);
\fill (-1,0.5) circle (0.025);
\fill (-1,1.5) circle (0.025);
\fill (-1,-1.5) circle (0.025);
\fill (-1,-0.5) circle (0.025);
\draw[thick] (-1,0.5)--(1,0);
\draw[thick] (-1,1.5)--(1,0);
\draw[thick] (-1,-0.5)--(1,0);
\draw[thick] (-1,-1.5)--(1,0);
\draw [thick, arrows = {-Stealth[reversed, reversed]}] (-1,0.5)--(0,0.25);
\draw [thick, arrows = {-Stealth[reversed, reversed]}] (-1,1.5)--(0,0.75);
\draw [thick, arrows = {-Stealth[reversed, reversed]}] (-1,-1.5)--(0,-0.75);
\draw [thick, arrows = {-Stealth[reversed, reversed]}] (-1,-0.5)--(0,-0.25);
		\end{tikzpicture}
		\caption{$\vec{K}_{4,1}$}\label{fig:41}
	\end{minipage}%
	\begin{minipage}{.5\textwidth}
		\centering
		\begin{tikzpicture}[scale=1.7]
 \fill (1,0) circle (0.025);
\fill (-1,0.5) circle (0.025);
\fill (-1,1.5) circle (0.025);
\fill (-1,-1.5) circle (0.025);
\fill (-1,-0.5) circle (0.025);
\draw[thick] (-1,0.5)--(1,0);
\draw[thick] (-1,1.5)--(1,0);
\draw[thick] (-1,-0.5)--(1,0);
\draw[thick] (-1,-1.5)--(1,0);
\draw [thick, arrows = {-Stealth[reversed, reversed]}] (-1,0.5)--(0,0.25);
\draw [thick, arrows = {-Stealth[reversed, reversed]}] (-1,1.5)--(0,0.75);
\draw [thick, arrows = {-Stealth[reversed, reversed]}] (-1,-1.5)--(0,-0.75);
\draw [thick, arrows = {-Stealth[reversed, reversed]}] (-1,-0.5)--(0,-0.25);
\draw [thick, arrows = {-Stealth[reversed, reversed]}] (-1,-0.5)--(-1,-1.5);
		\end{tikzpicture}
		\caption{$\vec{K}_{4,1}^+$}\label{fig:41+}
	\end{minipage}
\end{figure}

\begin{thm}  \label{4claw}Every sink-free digraph with no induced $\vec{K}_{4,1}$ and no induced $\vec{K}_{4,1}^+$ has a small quasi-kernel.
\end{thm}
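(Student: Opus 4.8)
The plan is to argue by contradiction along the same lines as Theorem~\ref{thm:claw}, but squeezing more out of the configuration around the vertex $v$. Suppose $D$ is sink-free, has no induced $\vec{K}_{4,1}$ and no induced $\vec{K}_{4,1}^+$, yet has no small quasi-kernel. Take a minimum quasi-kernel $Q$; subject to $|Q|$ being minimum I would further choose $Q$ so that $|N^{--}(Q)|$ is as small as possible (the point of this extremal choice, together with Observation~\ref{obs:N2}, is to be able to assume that $D[N^{--}(Q)]$ is as simple as possible). Build a maximum matching $M$ from $N^{-}(Q)$ to $Q$ and define $M_1, M_2, A$ as before. Then \eqref{eq1} gives $|N^{--}(Q)| < |A|$, and every out-neighbour of a vertex of $A$ lies in $N^{--}(Q)$, exactly as in the proof of Theorem~\ref{thm:claw}.

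The counting $\sum_{a\in A}\deg^+(a)\ge |A| > |N^{--}(Q)|$ produces a vertex $v\in N^{--}(Q)$ with two in-neighbours $a_1,a_2\in A$, and the replacement argument of Theorem~\ref{thm:claw} (delete the in-neighbours of $v$ lying in $Q$ and insert $v$) forces $v$ to have an in-neighbour $m\in M_1$; since $Q$ is independent and $v\in N^{--}(Q)$ has no out-neighbour in $Q$, $\{a_1,a_2,m\}\cup\{v\}$ already induces a $\vec{K}_{3,1}$. The first easy step is: if $v$ has \emph{any} fourth in-neighbour inside $Q$, then those four in-neighbours of $v$ (pairwise non-adjacent, with no arc from $v$) induce a $\vec{K}_{4,1}$, a contradiction; similarly one pushes the counting to rule out three in-neighbours of $v$ in $A$ or two in $M_1$ at once. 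So we may assume every such $v$ has exactly three in-neighbours in $Q$, namely $a_1,a_2\in A$ and $m\in M_1$.

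Next I would look at a fourth in-neighbour $w$ of $v$, which must lie in $N^{-}(Q)\cup N^{--}(Q)$. In the induced digraph on $\{a_1,a_2,m,w,v\}$ the four arcs into $v$ are present, the pairs $a_1a_2, a_1m, a_2m$ carry no arc, $v$ sends no arc to any of $a_1,a_2,m$, and: if $w\in N^{--}(Q)$ then $w$ sends no arc to $a_1,a_2,m$ (they lie in $Q$), while if $w\in N^{-}(Q)$ then $a_1,a_2$ send no arc to $w$ by Observation~\ref{obs1}. Hence the only possible ``extra'' arcs are a short list of arcs incident with $w$ (together, when $w$ lies in the appropriate layer, with the arc $v\to w$). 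If exactly one extra arc is present we obtain an induced $\vec{K}_{4,1}^+$, and if none is present an induced $\vec{K}_{4,1}$; in both cases we are done. The extremal choice of $Q$ together with Observation~\ref{obs:N2} is used here to eliminate the case $w\in N^{--}(Q)$ — either by arguing that $D[N^{--}(Q)]$ may be taken edgeless, or by showing directly that an arc of $D[N^{--}(Q)]$ into $v$ forces a strictly smaller quasi-kernel — so that we are reduced to $w\in N^{-}(Q)$.

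The main obstacle is the two residual bad configurations: (i) $v$ has in-degree exactly three, so there is no fourth in-neighbour to extend the anti-claw; and (ii) $w\in N^{-}(Q)$ is joined to two or more of $\{a_1,a_2,m,v\}$ by extra arcs, or forms a digon with $v$. To break these one has to modify $Q$ directly: delete the (at most three) in-neighbours of $v$ in $Q$ — and in case (ii) also the analogous few in-neighbours of $w$ — and insert $v$ (and $w$), then verify that a Jacob--Meyniel-style rerouting keeps every remaining vertex within distance two of the new independent set, producing a quasi-kernel strictly smaller than $Q$ and contradicting minimality. The delicate point, and the technical heart of the proof, is checking this rerouting for vertices of $N^{--}(Q)$ whose only short paths to $Q$ passed through the deleted vertices; this is where the hypothesis $|N^{--}(Q)|<|A|$, the fact that every vertex of $N^{-}(Q)$ has an out-neighbour in $M_1$, and the strengthened forbidden-subgraph hypothesis all have to be combined.
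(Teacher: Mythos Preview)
Your proposal shares the opening with the paper's proof (minimum $Q$, matching $M$, inequality~\eqref{eq1}, the vertex $v\in N^{--}(Q)$ with two in-neighbours in $A$ and one in $M_1$), but from that point on it diverges and does not close. Two of the steps you flag as ``delicate'' are in fact genuine gaps.

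First, the claim that the secondary extremal choice (minimising $|N^{--}(Q)|$ among minimum quasi-kernels) together with Observation~\ref{obs:N2} lets you ``eliminate the case $w\in N^{--}(Q)$'' or take $D[N^{--}(Q)]$ edgeless is unsupported: Observation~\ref{obs:N2} only produces another quasi-kernel $(Q\cup\tilde Q)\setminus N^-(\tilde Q)$, with no control on its size or on $|N^{--}|$ of the new kernel, so nothing forces $D[N^{--}(Q)]$ to be arc-free. Second, the proposed swap for case~(i) (replace $a_1,a_2,m$ by $v$) need not yield a quasi-kernel: a vertex $u\in N^{--}(Q)\setminus\{v\}$ whose only out-neighbour in $N^-(Q)$ is some $z$ with $N^+(z)\cap Q=\{m\}$ is now at distance~$3$ from the new set (the path $u\to z\to m\to v$ is too long, and $z\nrightarrow v$ since $v$ has in-degree exactly three). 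The same obstruction hits the double swap in case~(ii). You acknowledge this rerouting as ``the technical heart'' but give no argument for it, and I do not see one along these lines.

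What the paper does instead is exactly what you sidestep: it \emph{works inside} $N^{--}(Q)$ via a maximal quasi-kernel $\tilde Q$ of $D[N^{--}(Q)]$. Setting $A'=\{a\in A: a\text{ has an out-neighbour in }\tilde Q\cup(N^-(\tilde Q)\cap N^{--}(Q))\}$, minimality of $Q$ forces $|A'|\le|\tilde Q|$ (compare $Q$ with $(Q\cup\tilde Q)\setminus N^-(\tilde Q)$ minus the redundant part of $A$), whence $|A\setminus A'|>|N^{--}(\tilde Q)\cap N^{--}(Q)|$. This refined pigeonhole gives a vertex $v\in N^{--}(\tilde Q)\cap N^{--}(Q)$ with two in-neighbours in $A\setminus A'$; as before $v$ has an in-neighbour $m\in M_1$, and \emph{maximality of $\tilde Q$} guarantees a fourth in-neighbour $q\in\tilde Q$. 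Now the non-adjacencies are automatic: $q\in N^{--}(Q)$ sends no arc to $Q$; the two $A\setminus A'$ vertices send no arc to $q$ by definition of $A'$; $v$ sends no arc to any of the four since $v\in N^{--}(Q)\cap N^{--}(\tilde Q)$. The only possible extra arc is $m\to q$, yielding $\vec K_{4,1}$ or $\vec K_{4,1}^{+}$. The point is that $\tilde Q$ simultaneously supplies the fourth in-neighbour and, via $A'$, the required independence --- precisely the two things your case analysis cannot secure.
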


\begin{proof}
Let $D$ be a sink-free digraph with no induced $\vec{K}_{4,1}$ and no induced $\vec{K}_{4,1}^+$ and let $Q$ be a minimum quasi-kernel of $D$. Assume for a contradiction that  $Q$ is not small.  Let $\tilde{Q}$ be a maximal quasi-kernel of $D[N^{--}(Q)]$. By Observation \ref{obs:N2}, $Q'=(Q \cup \tilde{Q})\setminus N^-(\tilde{Q})$ is a quasi-kernel of $D$. Moreover, we can remove any vertex in $A=Q\setminus M_1$ from $Q'$ that has an arc to $N^-(\tilde{Q})$ to obtain a smaller quasi-kernel $Q''$ as these vertices have paths of length $2$ into $\tilde{Q}$. Let $A'$ be the set of vertices in $A$  that have an out-neighbour in $\tilde{Q}$ or $N^{-}(\tilde{Q})\cap N^{--}(Q)$. By the minimality of $Q$, we have  $|A'| \leq |\tilde{Q}|$ since
\[ |Q|\leq  |Q''| \leq |Q|+|\tilde{Q}|- |A'|.\]
Thus (using \eqref{eq1})

\begin{align*}
|A\setminus A'| &= |A|-|A'| > |N^{--}(Q)|-|A'|\\& = |\tilde{Q}|+|N^{-}(\tilde{Q})\cap N^{--}(Q)|+|N^{--}(\tilde{Q})\cap N^{--}(Q)| - |A'|\\
& \geq |N^{-}(\tilde{Q})\cap N^{--}(Q)| + |N^{--}(\tilde{Q})\cap N^{--}(Q)|
\end{align*}

As

$|N^{-}(\tilde{Q})\cap N^{--}(Q) | \geq 1$ we have

\begin{equation} \label{eq2}
|A\setminus A'| \geq |N^{--}(\tilde{Q})\cap N^{--}(Q)| +2
\end{equation}

Therefore, there is a vertex   
$v \in N^{--}(\tilde{Q})\cap N^{--}(Q)$  with at least two  in-neighbours in $A\setminus A'$. Moreover $v$ must have an in-neighbour in $M_1$ (otherwise we would start with $Q$ and swap $v$ and its neighbours in $A$ to obtain a quasi-kernel smaller than $Q$) and also an in-neighbour in $\tilde{Q}$ as $\tilde{Q}$ is a maximal quasi-kernel. Clearly the in-neighbours in $A\setminus A'$ and in $M_1$ are independent as they are part of the quasi-kernel $Q$ and there can be no arc from $v$ to any of these in-neighbours as it is in $N^{--}(Q)\cap N^{--}(\tilde{Q})$ and therefore the graph must contain a $\vec{K}_{4,1}$ or a $\vec{K}_{4,1}^+$, a contradiction.
\end{proof}

A digraph in which all vertices have in-degree at most $3$ cannot contain an induced  $\vec{K}_{4,1}$ or $\vec{K}_{4,1}^+$ and therefore has a small quasi-kernel. Then we may conjecture that every sink-free digraph with in-degree at most $4$ has a small quasi-kernel. The following theorem will support this conjecture.

\begin{thm}
Let $D$ be a digraph with no small quasi-kernel. Then $D$ contains a vertex with at least 5 in-neighbours (at least $3$ of which form an independent set)  or two independent arcs such that each vertex of the two arcs has at least $4$ in-neighbours, $3$ of which form an independent set.
\end{thm}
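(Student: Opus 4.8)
The strategy is to follow the same structure as the proof of Theorem~\ref{4claw}, but this time extracting a stronger conclusion from the counting inequalities. So let $D$ be a digraph with no small quasi-kernel, let $Q$ be a minimum quasi-kernel, let $M$ be a maximal matching from $N^-(Q)$ to $Q$ with $M_1, M_2, A$ as in Figure~\ref{fig:3}, let $\tilde Q$ be a maximal quasi-kernel of $D[N^{--}(Q)]$, and let $A'$ be the set of vertices in $A$ with an out-neighbour in $\tilde Q \cup (N^-(\tilde Q)\cap N^{--}(Q))$. As in the proof of Theorem~\ref{4claw} we obtain \eqref{eq2}, namely $|A\setminus A'| \geq |N^{--}(\tilde Q)\cap N^{--}(Q)| + 2$. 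The point now is to use this slack of $2$ more carefully. Every $v\in A\setminus A'$ has all its out-neighbours inside $N^{--}(\tilde Q)\cap N^{--}(Q)$ (by Observation~\ref{obs1} together with the definition of $A'$), and $D$ is sink-free, so we are looking at a bipartite "out-arc" incidence between $A\setminus A'$ and a strictly smaller set $W := N^{--}(\tilde Q)\cap N^{--}(Q)$.

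First I would handle the degenerate cases. If $|W| = 0$ then some $v\in A\setminus A'$ is a sink, contradiction; so $|W|\ge 1$, and hence $|A\setminus A'|\ge 3$. I would then do a counting argument on the bipartite incidence from $A\setminus A'$ into $W$: since $|A\setminus A'| \ge |W|+2$, either some single $w\in W$ receives arcs from at least three vertices of $A\setminus A'$, or (if every $w\in W$ receives arcs from at most two vertices of $A\setminus A'$) a double-counting of arcs forces the existence of at least two vertices $w_1, w_2 \in W$ each receiving arcs from exactly two vertices of $A\setminus A'$, and moreover these two pairs can be chosen to be "spread out". The cleanest way to see the second case: consider the bipartite graph $H$ with parts $A\setminus A'$ and $W$ and edges the arcs from $A\setminus A'$ to $W$; $H$ has no isolated vertex on the $A\setminus A'$ side, $|A\setminus A'| \ge |W|+2$, and if $\Delta_W(H)\le 2$ then $H$ has a component that is not a single edge or must contain a cycle/path structure producing two vertices of $W$ of degree $2$ whose neighbourhoods give us two independent arcs in $M_1$ (see below).

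Next, for each candidate vertex $w\in W$ with two in-neighbours $a_1,a_2\in A\setminus A'$: as in Theorem~\ref{4claw}, $w$ must also have an in-neighbour in $M_1$ (else swapping $w$ for its $A$-neighbours in $Q$ shrinks $Q$) and an in-neighbour in $\tilde Q$ (else $\tilde Q$ is not maximal), and $w$ has no out-arc back to any of its in-neighbours since $w\in N^{--}(Q)$. In the first case ($w$ with three in-neighbours $a_1,a_2,a_3\in A\setminus A'$, plus one in $M_1$), those five vertices give a vertex with $\ge 5$ in-neighbours of which $a_1,a_2,a_3$ (all in the independent set $Q$) are independent — the first alternative of the theorem. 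In the second case, I get two vertices $w_1, w_2\in W$ each with an in-neighbour in $M_1$, say $m_1, m_2$, and two in-neighbours in $A\setminus A'$; each $w_i$ thus has in-degree $\ge 3$ with an independent in-set of size $\ge 3$ (its two $A$-neighbours together with $m_i$, all lying in $Q$). But to invoke the theorem's second alternative I need $m_1\ne m_2$ and the arc $m_1 w_1$ independent from $m_2 w_2$ — i.e. $\{m_1,w_1\}\cap\{m_2,w_2\}=\emptyset$. Here $w_1\ne w_2$ by choice; $m_1\ne m_2$ can be arranged by matching-augmentation/alternating-path reasoning, since if $w_1,w_2$ shared their only $M_1$-neighbour $m$ one can re-route the matching $M$ through $w_1$ (it has a free $A$-edge) and move $w_2$'s partner, contradicting maximality of $M$ or minimality of $Q$; and $m_i \ne w_j$ because $m_i\in M_1\subseteq Q$ while $w_j\in N^{--}(Q)$.

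The main obstacle I expect is precisely this last bookkeeping: guaranteeing that the two in-$M_1$ vertices $m_1, m_2$ associated to $w_1, w_2$ are distinct and that the corresponding arcs are vertex-disjoint, rather than merely that two vertices of large independent in-degree exist. This is where the "$+2$" slack in \eqref{eq2} (as opposed to "$+1$") has to be spent, and one must be careful that the counting in the bipartite incidence genuinely yields two structurally separated witnesses and not two overlapping ones. Once the disjointness is secured, both alternatives of the theorem drop out immediately, so the heart of the argument is the combinatorial lemma that $|A\setminus A'|\ge |W|+2$ together with the no-sink and maximality conditions forces either a high-in-degree vertex or two disjoint such arcs.
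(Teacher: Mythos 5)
Your setup is the same as the paper's up to the counting step: minimum $Q$, the matching $M$ with $M_1,A$, a maximal quasi-kernel $\tilde Q$ of $D[N^{--}(Q)]$, the set $A'$, inequality \eqref{eq2}, and the conclusion that either some vertex of $N^{--}(\tilde Q)\cap N^{--}(Q)$ has three in-neighbours in $A\setminus A'$ (giving the in-degree-$5$ alternative) or there are two vertices $w_1,w_2$ there, each with two in-neighbours in $A\setminus A'$, one in $M_1$ and one in $\tilde Q$. The genuine gap is in how you turn these two heavy vertices into the theorem's second alternative, which requires two vertex-disjoint arcs such that \emph{all four} endpoints have at least $4$ in-neighbours, $3$ of them independent. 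Your candidate arcs are $m_1w_1$ and $m_2w_2$ with $m_i\in M_1$, but nothing controls the in-degree of a vertex of $M_1$: it may have a single in-neighbour (its matching partner in $M_2$), so the tails of your arcs need not be heavy, and no matching-augmentation or alternating-path argument can create in-neighbours for them. The bookkeeping you identify as the main obstacle (ensuring $m_1\neq m_2$ and disjointness of the two arcs) is not where the real difficulty lies.

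The paper resolves exactly this point by choosing $\tilde Q$ with care rather than arbitrarily: it defines $N_2$ as the set of vertices of $N^{--}(Q)$ with at least two in-neighbours in $A$ (these automatically also have an in-neighbour in $M_1$, so they are the natural heavy partners), fixes a maximum independent set $N_2'\subseteq N_2$, and takes $\tilde Q$ maximal subject to containing all of $N_2'$ in $\tilde Q\cup (N^-(\tilde Q)\cap N^{--}(Q))$ and as much of $N_2$ there as possible. Then, after disposing of the case of an arc between the two witnesses themselves (which gives in-degree $5$), the maximality of $N_2'$ yields two \emph{distinct} heavy partners $x,y\in N_2'$ for $u$ and $w$, and a case analysis on the direction of the arc between $u$ and $x$ and on whether $x\in\tilde Q$ or $x\in N^-(\tilde Q)$ either produces a vertex of in-degree $5$ or shows that both endpoints of the arc reach in-degree $4$ with three independent in-neighbours; the final sub-case is excluded by an exchange argument (replacing $x$ by $u$ in $\tilde Q$), which is only available because of the constrained choice of $\tilde Q$. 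None of these ingredients (the sets $N_2$ and $N_2'$, the constrained $\tilde Q$, the pairing with partners that are themselves heavy, the exchange step) appears in your plan, so as written the argument does not establish the second alternative.
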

\begin{proof}
We follow the proof of Theorem~\ref{4claw} but choose $\tilde{Q}$ with more care. Consider the set $N_2$ of all vertices in $N^{--}(Q)$ that have at least two in-neighbours in $A$ (and therefore an in-neighbour in $M_1$ by the minimality of $Q$ as before). Let $N_2'$ be a maximum independent set of $N_2$. We choose $\tilde{Q}$ as a maximal quasi-kernel of $D[N^{--}(Q)]$ with as many vertices of $N_2$ in $\tilde{Q}$ or $N^{-}(\tilde{Q})\cap N^{--}(Q)$  as possible and such that all vertices in $N_2'$ are either in $\tilde{Q}$ or in $N^{-}(\tilde{Q})\cap N^{--}(Q)$. The fact that such a $\tilde{Q}$ exists is well-known and follows from the proof in
\cite{CL}: One removes $N'_2$ and all its in-neighbours in $N^{--}(Q)$ and finds (inductively) a quasi-kernel $\tilde{Q}$ of the  digraph induced by the remaining vertices in $N^{--}(Q)$. Now, one can add all the vertices of $N'_2$ that have no (out-)neighbours in $\tilde{Q}$ to $\tilde{Q}$ to form a quasi-kernel of $D[N^{--}(Q)]$ with the required properties.

By \eqref{eq2}, we have two cases.

\vspace{1mm}

\noindent{\bf Case 1:} There is a vertex $v \in  N^{--}(\tilde{Q})$ with $3$ neighbours in $A\setminus A'$.  Then we can argue as in the proof of Theorem~\ref{4claw} that  $v$ has $5$ in-neighbours ($4$ of which are in $Q$ and therefore independent).

\vspace{1mm}

\noindent{\bf Case 2:} There is no vertex $v \in  N^{--}(\tilde{Q})$ with $3$ in-neighbours in $A\setminus A'$. Then there must be two vertices $u$ and $w$ from $N^{--}(\tilde{Q})$, each of which must have two in-neighbours in $A\setminus A'$ and  an in-neighbour in $M_1$ and in $\tilde{Q}$.  If there is an arc say from $u$ to $w$, then $w$ has $5$ in-neighbours (and $3$ of them in $Q$ are independent).
If there is no arc between $u$ and  $w$ then there are two distinct neighbours   $x$ and $y$ in $N_2'$ as $N_2'$ is a maximum independent set of $N_2$ so the two independent vertices $u$ and $w$ cannot have a single vertex in $N_2'$ as their joint neighbourhood in $N_2$.

We assume that $x$ is the neighbour of $u$ (and $y$ is the neighbour of $w$). If there is an arc from $u$ to $x$, then $x$ and $u$ have in-degree at least $4$ and at least $3$ in-neighbours of each vertex are in $Q$ and therefore independent.  If there is an arc from $x$ to $u$ and $x\in N^{-}(\tilde{Q})$, then $u$ has in-degree at least $5$ as $x$ has at least $3$ in-neighbours in $Q$, at least one in-neighbour in $\tilde{Q}$ and at least one in-neighbour, namely $x$, in $N^{-}(\tilde{Q})$.
If $x\in \tilde{Q}$ then we consider three cases. Firstly, if there is another in-neighbour of $u$ in $\tilde{Q}$ then $u$ has at least $5$ in-neighbours ($4$ of which are independent, namely the ones in $A\setminus A'$ and $\tilde{Q}$). Secondly, if $x$ has another in-neighbour then both $x$ and $u$ have $4$ in-neighbours ($3$ of which are in $Q$ and therefore independent). Finally, if neither $x$ nor $u$ have any in-neighbours then we can replace $x$ by $u$ in $\tilde{Q}$ to obtain a quasi-kernel which has one more element of $N_2$ in it or its first in-neighbourhood, a contradiction to the choice of $\tilde{Q}$.
\end{proof}

\section{Alternative proof for a generalization of $4$-colorable digraphs}\label{sec:4color}
In this section we use our set-up to give a short proof of a result  by Kostochka et al. \cite{KLS}. We start with the following lemma.

\begin{lemma}\label{lem2}
	Let $D$ be a sink-free digraph. If $D$ has a quasi-kernel $Q$ such that $D[N^{--}(Q)]$ has a kernel, then $D$ has a small quasi-kernel.
\end{lemma}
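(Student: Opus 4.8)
The plan is to use the setup established in Section \ref{sec:cl}: take a minimum quasi-kernel $Q$ of $D$, build a maximum matching $M$ from $N^{-}(Q)$ to $Q$, and set $M_1, M_2, A$ as in Figure~\ref{fig:3}. Assume for contradiction that $Q$ is not small, so in particular inequality \eqref{eq1} gives $|N^{--}(Q)| < |A|$. The hypothesis gives us something strictly better than a mere quasi-kernel of $D[N^{--}(Q)]$: it gives a \emph{kernel} $\tilde{Q}$ of $D[N^{--}(Q)]$, meaning every vertex of $N^{--}(Q)\setminus \tilde{Q}$ has an out-neighbour \emph{in} $\tilde{Q}$ (a path of length exactly one, not two). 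Consequently $N^{--}(Q) = \tilde{Q} \cup (N^{-}(\tilde{Q})\cap N^{--}(Q))$ with no third piece $N^{--}(\tilde{Q})\cap N^{--}(Q)$ — this is the key simplification over the general argument.

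Next I would run the counting from the proof of Theorem~\ref{4claw}. By Observation~\ref{obs:N2}, $Q' = (Q\cup\tilde{Q})\setminus N^{-}(\tilde{Q})$ is a quasi-kernel of $D$, and after deleting from $Q'$ every vertex of $A$ with an arc into $N^{-}(\tilde{Q})$ we get a quasi-kernel $Q''$ with $|Q''| \le |Q| + |\tilde{Q}| - |A'|$, where $A'$ is the set of vertices of $A$ with an out-neighbour in $\tilde{Q}$ or in $N^{-}(\tilde{Q})\cap N^{--}(Q)$. Minimality of $Q$ forces $|A'| \le |\tilde{Q}|$. Now, because $N^{--}(Q)$ has only the two pieces $\tilde{Q}$ and $N^{-}(\tilde{Q})\cap N^{--}(Q)$, we get
\[
|A\setminus A'| = |A| - |A'| > |N^{--}(Q)| - |A'| = |\tilde{Q}| + |N^{-}(\tilde{Q})\cap N^{--}(Q)| - |A'| \ge |N^{-}(\tilde{Q})\cap N^{--}(Q)| \ge 0.
\]
So $|A\setminus A'| \ge 1$, i.e.\ there is a vertex $a \in A\setminus A'$. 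Since $D$ is sink-free, $a$ has an out-neighbour, which (by Observation~\ref{obs1}) must lie in $N^{--}(Q)$; since $a\notin A'$ that out-neighbour lies neither in $\tilde{Q}$ nor in $N^{-}(\tilde{Q})\cap N^{--}(Q)$. But $N^{--}(Q)$ equals the union of exactly those two sets — contradiction.

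The main obstacle, and the reason the kernel hypothesis is essential, is precisely controlling the structure of $N^{--}(Q)$: in the general case one only knows $N^{--}(Q) = \tilde{Q} \cup (N^{-}(\tilde{Q})\cap N^{--}(Q)) \cup (N^{--}(\tilde{Q})\cap N^{--}(Q))$ and must work to rule out or exploit the third term (as in Theorem~\ref{4claw}, which needs the forbidden subdigraphs). Having a kernel kills the third term outright, so the counting collapses to a one-line contradiction rather than the production of an induced $\vec{K}_{4,1}$. I would double-check only that every vertex of $A\setminus A'$ genuinely has no valid out-neighbour — this is immediate from Observation~\ref{obs1} plus the partition of $V(D)$ into $Q$, $N^{-}(Q)$, $N^{--}(Q)$ induced by the quasi-kernel $Q$ — and the argument is complete.
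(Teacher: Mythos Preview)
There is a genuine gap. You begin by taking a \emph{minimum} quasi-kernel $Q$ of $D$, but the hypothesis of the lemma only guarantees that \emph{some} quasi-kernel $Q_0$ has the property that $D[N^{--}(Q_0)]$ contains a kernel. There is no reason the minimum quasi-kernel should inherit this property, so your sentence ``the hypothesis gives us \ldots\ a kernel $\tilde{Q}$ of $D[N^{--}(Q)]$'' is unjustified. Conversely, if you work with the given $Q_0$ instead, you lose both the use of Observation~\ref{obs1} (which needs minimality) and, more fatally, the inequality $|Q|\le |Q''|$ used to deduce $|A'|\le |\tilde{Q}|$ (which needs $Q$ to be a minimum quasi-kernel). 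Your counting argument therefore collapses from both ends.

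The paper's proof avoids this tension entirely. It works with the \emph{given} $Q$ and never appeals to global minimality. Instead it first prunes from $A$ any vertex $v$ with no out-neighbour in $N^{--}(Q)$, observing that $v$ becomes an isolated vertex of the new $N^{--}$, so the kernel hypothesis is preserved under this reduction. After this step every vertex of $A$ shoots into $N^{--}(Q)$. Then the paper simply compares two candidates: if $|A|\le |N^{--}(Q)|$ then $Q$ itself is small; if $|A|>|N^{--}(Q)|$ then $(M_1\cup K)\setminus N^{-}(K)$ is a quasi-kernel of size at most $|M_1|+|N^{--}(Q)|<|M_2|+|A|$, hence small. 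No minimality of $Q$ is ever invoked, and no contradiction argument is needed.
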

\begin{proof}
Let $Q$ be a  quasi-kernel such that  $D[N^{--}(Q)]$ has a kernel, and let $M$ be a maximal matching from $N^-(Q)$ to $Q$.
We may assume that every vertex in $A=Q\backslash M_1$  has an out-neighbour in  $N^{--}(Q)$ as otherwise we can remove $v$  from $Q$ as  $v$ is an isolated vertex in $N^{--}(Q\backslash \{v\})=N^{--}(Q)\cup\{v\}$, and therefore $Q\setminus \{v\}$ is a quasi-kernel such that $N^{--}(Q\backslash \{v\})$ has a kernel.
	
Let $K$ be a kernel of $D[N^{--}(Q)]$. If $|A|\leq |N^{--}(Q)|$, then $|Q|=|M_1|+|A|\leq |M_2|+|N^{--}(Q)|$, which implies $|Q|$ is a small quasi-kernel. If $|A|> |N^{--}(Q)|$, then one can observe that $(M_1\cup K)\backslash N^-(K)$ is a quasi-kernel of $D$ whose size is at most $|M_1|+|K|\leq |M_1|+|N^{--}(Q)|<|M_2|+|A|$, and therefore is a small quasi-kernel.
\end{proof}

 A digraph is {\em kernel-perfect} if every induced subdigraph of it has a kernel. Note that every digraph with no odd cycle is kernel-perfect \cite{R}. In particular, bipartite digraphs are kernel-perfect. In \cite{KLS} the authors consider a slightly more general conjecture, namely that all digraphs with a set of sinks\footnote {In \cite{KLS}, they defined the quasi-kernel in an opposite direction. Namely, they defined it as an independent set $Q$ such that for any $v\notin Q$ there exists a directed path within two arcs from a vertex $u\in Q$ to $v$. So, in their case, they considered a set of sources.} $S$ have a quasi-kernel of size at most $(n+|S| - |N^{-}(S)|)/2$. They show that this conjecture is equivalent to the original conjecture in the sense that the smallest counterexample to the new conjecture does not contain any sink.  The following theorem implies that both conjectures hold for 4-colourable digraphs.

\begin{thm}\cite{KLS}  \label{thm6} Let $D$ be an $n$-vertex digraph and $S$ be the set of sinks of $D$. Suppose that $V (D) \setminus N^{-}[S]$  has a partition $V_1 \cup  V_2$ such that $D[V_i]$ is kernel-perfect for each $i = 1, 2$. Then D has a quasi-kernel of size at most  $(n+|S|-|N^{-}(S)|)/2$.
\end{thm}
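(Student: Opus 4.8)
The plan is to follow the set-up already developed in the earlier sections, adapting it to the presence of a sink set $S$. First I would reduce to the sink-free situation: since $V(D)\setminus N^{-}[S]$ is the vertex set on which we have the kernel-perfect partition, I would look at the digraph $D$ with the sinks "charged" to their in-neighbours, and aim to build a quasi-kernel $Q$ with $|Q|\le (n+|S|-|N^-(S)|)/2$. The natural candidate is to take $Q$ to be a quasi-kernel of $D$, then apply the Jacob--Meyniel swap (Observation~\ref{obs:N2}) using a \emph{kernel} of $D[N^{--}(Q)]$ rather than an arbitrary quasi-kernel; this is exactly the mechanism of Lemma~\ref{lem2}, and the first step is to check that $N^{--}(Q)$ is contained in $V(D)\setminus N^-[S]$ for a suitably chosen minimal $Q$ (sinks and their in-neighbours can be arranged to sit in $Q$ and $N^-(Q)$ respectively, or to be irrelevant to the count), so that $D[N^{--}(Q)]$ is an induced subdigraph of $D[V_1]\cup D[V_2]$.

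The second step is the key point: $D[N^{--}(Q)]$ need not be kernel-perfect, only a union of two kernel-perfect parts $D[V_1\cap N^{--}(Q)]$ and $D[V_2\cap N^{--}(Q)]$. So I would not get a kernel of $D[N^{--}(Q)]$ directly. Instead I would iterate the Jacob--Meyniel construction twice: take a kernel $K_1$ of $D[V_1\cap N^{--}(Q)]$, form the modified quasi-kernel $Q_1=(Q\cup K_1)\setminus N^-(K_1)$, and then handle what remains of $V_2$. The point is that after absorbing $K_1$, the new second in-neighbourhood lies inside $V_2$ (up to vertices already accounted for), where again a kernel exists. Carefully bookkeeping which vertices are gained and lost at each swap, and comparing with the matching $M$ from $N^-(Q)$ to $Q$ as in Figure~\ref{fig:3}, should yield the bound. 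Concretely, I expect the inequality chain to run along the lines of Lemma~\ref{lem2}: either $|A|\le$ (size of the relevant second-neighbourhood chunk), in which case the matching $M$ already saturates $Q$ against $N^-(Q)\cup N^{--}(Q)$ with room for the sink correction $|S|-|N^-(S)|$, or $|A|$ is larger and the two-step kernel swap replaces $Q$ by something of size at most $|M_1|+|K_1|+|K_2|$, which is bounded by the size of $N^-(Q)$ plus the second-neighbourhood and hence again small after the $|S|$-correction.

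The main obstacle I anticipate is the interaction of the two kernel swaps: after replacing $N^-(K_1)$ by $K_1$, the set $V_2\cap N^{--}(Q)$ may no longer be exactly the second in-neighbourhood of the updated quasi-kernel — some of its vertices may now only reach the quasi-kernel through $K_1$, or may have become reachable in one arc, or (worst case) may have lost all short paths. Making precise that one can still find a kernel of the \emph{correct} induced subdigraph at the second stage, and that the vertices removed at the two stages are disjoint and are correctly compensated by the vertices added, is the delicate part; this is where I would be most careful, possibly by choosing $Q$ minimal and invoking Observations~\ref{obs0}--\ref{obs1} to control arcs out of $A$. Once that bookkeeping is pinned down, the arithmetic with $|S|$ and $|N^-(S)|$ is routine, since each sink contributes $1$ to $n$ and to $|S|$ while forcing at least one vertex into $N^-(S)\subseteq N^-(Q)$, so the slack $|S|-|N^-(S)|$ is exactly what the matching bound can absorb.
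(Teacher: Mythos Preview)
Your plan has a genuine gap: you never reach the simplification that lets the argument run with a \emph{single} Jacob--Meyniel swap rather than two. The paper's key move (following \cite{KLS}) is to first re-choose the partition so that $|V_2|$ is minimal; this forces every vertex of $V_2$ to have an out-neighbour in $V_1$, since any $v\in V_2$ with no such arc could be transferred to $V_1$ while keeping $D[V_1\cup\{v\}]$ kernel-perfect. Once this is arranged, a kernel $K$ of $D[V_1]$ is already a quasi-kernel of $D[V_1\cup V_2]$, and because $V_1\subseteq N^-[K]$ one has $N^{--}(K)\subseteq V_2$. Thus $D[N^{--}(K)]$ is an induced subdigraph of the kernel-perfect $D[V_2]$ and has a kernel outright; Lemma~\ref{lem2} (or rather its proof) then finishes in one step after one trims from $A$ the vertices whose only out-arcs leave $V_1\cup V_2$, and adds $S$ back at the end.

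Your two-swap route, by contrast, starts from an arbitrary quasi-kernel $Q$, so $N^{--}(Q)$ straddles both $V_1$ and $V_2$, and the obstacle you yourself flag is real and not resolved: after replacing $Q$ by $Q_1=(Q\cup K_1)\setminus N^-(K_1)$ using a kernel $K_1$ of $D[V_1\cap N^{--}(Q)]$, the new second in-neighbourhood $N^{--}(Q_1)$ need not lie in $V_2$, because vertices formerly in $N^-(Q)$ whose only out-arcs went to $Q\cap N^-(K_1)$ slide back to distance two, and such vertices may well sit in $V_1$. So you cannot simply invoke kernel-perfectness of $D[V_2]$ at the second stage, and the bookkeeping you sketch does not close. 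The fix is not more careful accounting but a better choice of the initial quasi-kernel: take it to be a kernel of $D[V_1]$ from the start.
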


\begin{proof}

Let $(V_1,V_2)$ be the partition such that $|V_2|$ is as small as possible and each part is kernel-perfect. Following \cite{KLS} we first show that every vertex in $V_2$ has an out-neighbour in $V_1$. Assume for a contradiction that $v\in V_2$ has no out-neighbour in $V_1$. We will show that  $D[V_1\cup \{v\}]$ is also kernel-perfect.  To do so, let $F$ be a subset of $V_1\cup \{v\}$. If $v\notin F$ then $D[F]$ has a kernel since $D[V_1]$ is kernel-perfect. If $v\in F$,  let $K_0$ be a kernel in $D[F\backslash N^{-}(v)]$ then $K_0\cup \{v\}$ is a kernel of $D[F]$.

Since every vertex in $V_2$ has an out-neighbour in $V_1$,  any kernel of $D[V_1]$ is a quasi-kernel of $D\setminus N^{-}[S]$.
Following \cite{KLS} we remove $S$ and $N^{-}(S)$ from $D$ and consider a kernel $K$  of $D[V_1]$. As before, let $M$ be a maximal matching from $N^{-}(K)$ to $K$, let $M_1$ be the subset of $K$ covered by $M$ and let  $A=K\setminus M_1$. We can remove the set $A' \subset A$  from $K$ of all vertices that do not have an out-neighbour into $V_1 \cup V_2$ as they are in the second neighbourhood of $S$ and the resulting subset of $K$ is still a quasi-kernel of the remaining vertices.
Now arguing as in  the proof of Lemma \ref{lem2} we obtain a small quasi-kernel of $(V_1\cup V_2)\setminus A' $ and together with $S$, this forms a quasi-kernel of $D$ which is sufficiently small for the result to hold.
\end{proof}

\section{Good Quasi-Kernels}\label{sec:good}

\begin{thm}\label{T1}
Every sink-free digraph $D=(V,A)$ with a good quasi-kernel has a small quasi-kernel.
\end{thm}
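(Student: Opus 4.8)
The plan is to reuse the standard set-up from Section~\ref{sec:cl}. Let $Q$ be a good quasi-kernel of $D$ and let $M$ be a maximal matching from $N^{-}(Q)$ to $Q$, with $M_1\subseteq Q$ and $M_2\subseteq N^{-}(Q)$ its covered parts and $A=Q\setminus M_1$. By maximality of $M$ we have $N^{-}(Q)=N^{-}(M_1)$, so $|M_1|=|M_2|$ and it suffices to inject $A$ into $N^{--}(Q)$; then $|Q|=|M_1|+|A|\le |M_2|+|N^{--}(Q)|\le |V\setminus Q|$. So assume for contradiction that $|A|>|N^{--}(Q)|$.

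The key point is to exploit the ``goodness'' of $Q$. For each $u\in Q$ there is an arc from $u$ to some vertex $w(u)$ that is an in-neighbour of a vertex in $Q$, i.e. $w(u)\in N^{-}(Q)$. For $u\in A$, Observation~\ref{obs1} forbids arcs from $A$ into $N^{-}(Q)$ when $Q$ is minimal; but here $Q$ need not be minimal, so I first pass to a minimal quasi-kernel. The subtlety is that goodness is not obviously inherited by sub-quasi-kernels obtained by deletion, so instead I would argue directly: take $Q$ minimal among good quasi-kernels, or better, observe that the good-quasi-kernel witness arcs let us reroute. Concretely, define a map $f\colon A\to N^{--}(Q)$ as follows. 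For $u\in A$, the witness arc $uw(u)$ with $w(u)\in N^{-}(Q)$ together with an arc from $w(u)$ into $M_1$ (which exists since $N^{-}(Q)=N^{-}(M_1)$) would place $u$ at distance two from $M_1$, contradicting $u\in A=Q\setminus M_1$ only if we are careful — actually what this shows is that if every $u\in A$ had such a witness, $A$ could be removed. So the real content is: each $u\in A$ has all its out-neighbours in $N^{--}(Q)$ (sink-freeness gives at least one), and moreover goodness applied to $M_1$ lets us ``absorb'' $A$.

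The cleanest route: build an auxiliary bipartite graph between $A$ and $N^{--}(Q)$ using out-arcs of $A$, and between $M_1$ and $N^{-}(Q)=N^{-}(M_1)$ via $M$, and use the good-quasi-kernel arcs from $M_1$ into $N^{-}(Q)$ to show that $N^{--}(Q)$ together with these arcs forms a structure forcing a Hall-type matching saturating $A$. If $|A|>|N^{--}(Q)|$ fails Hall's condition, there is a set $S\subseteq A$ whose out-neighbourhood $N^{+}(S)\cap N^{--}(Q)$ has size $<|S|$; then swapping $S$ for $N^{+}(S)$ (legitimate since there are no arcs from $S$ into $N^{-}(Q)$ after minimality, and $N^{+}(S)$ is reachable in one step) yields a strictly smaller quasi-kernel, and one checks using the goodness witnesses that it remains good, contradicting minimality. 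The main obstacle is exactly this last verification — that the swapped quasi-kernel is still good (or arranging the induction so goodness is not needed after the first swap) — and handling the case where a vertex of $A$ has all out-neighbours already inside $N^{-}(Q)$, which minimality of $Q$ must be invoked to rule out. Once Hall's condition holds, the saturating matching injects $A$ into $N^{--}(Q)$ and we are done.
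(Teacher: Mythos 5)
There is a genuine gap: what you have written is a plan with admitted unresolved obstacles, not a proof, and the detour it proposes is both unnecessary and shaky. You are one step away from the actual argument and then walk past it. Goodness of $Q$ applies to \emph{every} vertex of $Q$, in particular to every $u\in A=Q\setminus M_1$: each such $u$ has an out-neighbour $w(u)\in N^-(Q)=N^-(M_1)$, and $w(u)$ has an arc into $M_1$, so $A\subseteq N^{--}(M_1)$. Combined with Observation~\ref{obs0}, this says $M_1$ is already a quasi-kernel of all of $D$, and it is still good (its witnesses are the same arcs into $N^-(Q)=N^-(M_1)$). So if you choose $Q$ of minimum size among \emph{good} quasi-kernels (not among all quasi-kernels -- this is exactly how one sidesteps your worry that minimality destroys goodness), minimality forces $A=\emptyset$, whence $|Q|=|M_1|=|M_2|\le |N^-(Q)|\le |V\setminus Q|$ and $Q$ is small. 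No injection of $A$ into $N^{--}(Q)$ is needed at all. This is precisely the paper's proof, and at the point where you say ``if every $u\in A$ had such a witness, $A$ could be removed'' you have all the ingredients but do not notice that goodness guarantees the witness unconditionally.

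The fallback route you then sketch does not stand on its own. Swapping a Hall-violating set $S\subseteq A$ for $N^+(S)\cap N^{--}(Q)$ is not a legitimate move as stated: $N^+(S)$ need not be independent, other vertices of $A$ may have arcs into $N^+(S)$ (so the new set need not be independent either), one must re-verify that every vertex of $V\setminus Q$ still reaches the new set in at most two steps, and -- as you concede -- there is no argument that the resulting quasi-kernel is again good, so the intended induction on minimality does not close. Since you explicitly flag these verifications as the ``main obstacle'' without resolving them, the proposal as written does not establish Theorem~\ref{T1}; the fix is the direct argument above.
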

\begin{proof}
Let $Q$ be a good quasi-kernel of minimum size in $D$. Let $M$, $M_1$ and $M_2$ have the same definitions of those in Section \ref{sec:cl}. Recall that $N^-(M_1)=N^-(Q)$. In addition, $Q\backslash M_1\subseteq N^{--}(M_1)$ by the definition of good quasi-kernels. Thus, $M_1$ is also a good quasi-kernel of $D$, which contradicts to the minimality of $Q$ if $|Q\backslash M_1|>0$. Therefore, we have $|Q|=|M_1|=|M_2|\leq |N^-(Q)|$, which implies that $Q$ is a small quasi-kernel.
\end{proof}

Now we can prove the following result whose Part 2 was recently shown by van Hulst \cite{H} using a longer inductive proof.

\begin{cor}\label{boundedkernel}
Let $D$ be a sink-free digraph and $K$ a kernel of $D$. Then
\begin{enumerate}
\item $K$ is a good quasi-kernel;
\item $D$ has a quasi-kernel of size at most $|V(D)|/2.$
\end{enumerate}
\end{cor}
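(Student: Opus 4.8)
The plan is to prove Corollary~\ref{boundedkernel} by reducing Part~2 to Part~1 via Theorem~\ref{T1}, and to prove Part~1 directly from the definitions. Since Part~2 is immediate from Part~1 and Theorem~\ref{T1} (a good quasi-kernel forces a small one), the real content is verifying that a kernel of a sink-free digraph is always a good quasi-kernel.

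First I would recall what has to be checked. A kernel $K$ is by definition an independent set such that every $v\in V(D)\setminus K$ has an \emph{arc} into $K$; in particular $K$ is a quasi-kernel, and moreover $N^-(K)=V(D)\setminus K$. To show $K$ is \emph{good}, I must show that for each $u\in K$ there is an arc from $u$ to some vertex lying in $N^-(K)$, i.e.\ to some in-neighbour of a vertex of $K$. Since $D$ is sink-free, $u$ has at least one out-neighbour $w$; because $K$ is independent, $w\notin K$, so $w\in V(D)\setminus K=N^-(K)$. Hence $w$ is an out-neighbour of $u$ that is itself an in-neighbour of a vertex of $K$, which is exactly the condition in the definition of a good quasi-kernel. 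This settles Part~1.

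For Part~2, I would simply invoke Part~1 together with Theorem~\ref{T1}: $D$ is sink-free and possesses a good quasi-kernel (namely $K$), so by Theorem~\ref{T1} it has a quasi-kernel of size at most $|V(D)|/2$. Note one small subtlety worth flagging: the statement presupposes that a kernel $K$ exists; if $D$ has no kernel there is nothing to prove, and if $D$ has a kernel then the argument above applies verbatim. No case analysis or induction is needed.

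I do not expect a genuine obstacle here — the proof is essentially unwinding definitions — but the one point requiring care is making sure the \emph{direction} of arcs matches the definition of ``good'': the out-neighbour $w$ of $u\in K$ must be an in-neighbour of \emph{some} vertex of $K$, and this holds precisely because $w\notin K$ and $N^-(K)=V(D)\setminus K$ for a kernel, so every non-kernel vertex, $w$ included, sends an arc into $K$. If I wanted to make the write-up even more self-contained I would also remark explicitly that $K$ being a quasi-kernel (not just the good-ness) uses the same fact $N^-(K)=V(D)\setminus K$, since an arc into $K$ is a directed path of length one.
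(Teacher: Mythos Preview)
Your proof is correct and follows essentially the same approach as the paper: for Part~1 you use that any out-neighbour of $u\in K$ lies outside the independent set $K$ and hence in $N^-(K)=V(D)\setminus K$, and for Part~2 you invoke Theorem~\ref{T1}. The paper's write-up is slightly terser (it simply notes $N^+(u)\subseteq N^-(K)$ and $N^+(u)\neq\emptyset$), but the argument is the same.
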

\begin{proof}
For any $u\in K$, $N^+(u)\subseteq N^-(K)$. Since $D$ is sink-free, for all $u\in K$, $N^+(u)\ne \emptyset$. Therefore, for any $u\in K$, we have $N^+(u)\cap N^-(K)=N^+(u)\ne \emptyset$. Thus, $K$ is a good quasi-kernel.
Part 2 follows from Theorem \ref{T1} and Part 1.
\end{proof}

Let $T$ be a digraph with vertex set $\{u_1,\dots, u_t\}$ and  for every $i\in [t]$,  let $D_i$ be a digraph with vertex set $\{u_{i,j_i}\colon\,
j_i\in [n_i]\}$.
The \emph{composition} of $T$ and $D_1,\dots, D_t$
is the digraph  $H=T[D_1,\dots , D_t]$ with vertex set $\{u_{i,j_i}\colon\,  i\in [t],  j_i\in [n_i]\}$ and arc set
$$A(H)=\cup^t_{i=1}A(D_i)\cup \{u_{i,j_i}u_{p,q_p}\colon\, u_iu_p\in A(T), j_i\in [n_i], q_p\in [n_p]\}.$$

By Corollary  \ref{boundedkernel}, every sink-free digraph with a kernel has a good quasi-kernel. The following proposition shows that the opposite is not true i.e. the set of sink-free digraphs with a kernel is a proper subset of sink-free digraphs with a good quasi-kernel.

\begin{figure}[H]
\begin{center}
\begin{tikzpicture}[scale=1.7]

\fill (-2,1) circle (0.025);
\fill (0,0.5) circle (0.025);
\fill (2,1) circle (0.025);
\fill (-2,-1) circle (0.025);
\fill (0,-0.5) circle (0.025);
\fill (2,-1) circle (0.025);

\draw[thick] (-2,1)--(0,0.5)--(2,1)--(-2,1);
\draw[thick] (-2,-1)--(0,-0.5)--(2,-1)--(-2,-1);
\draw[thick] (0,0.5)--(0,-0.5);
\draw[thick] (2,1)--(0,-0.5);
\draw[thick] (2,-1)--(0,0.5);

\node[above] at (-2.02,1)  {$1$};
\node[above] at (0.02,0.5)  {$2$};
\node[above] at (2.02,1)  {$3$};
\node[below] at (-2.02,-1)  {$4$};
\node[below] at (0.02,-0.5)  {$5$};
\node[below] at (2.02,-1)  {$6$};
\draw [thick, arrows = {-Stealth[reversed, reversed]}] (2,1)--(0,1);
\draw [thick, arrows = {-Stealth[reversed, reversed]}] (-2,1)--(-1,0.75);
\draw [thick, arrows = {-Stealth[reversed, reversed]}] (0,0.5)--(1,0.75);
\draw [thick, arrows = {-Stealth[reversed, reversed]}] (2,-1)--(0,-1);
\draw [thick, arrows = {-Stealth[reversed, reversed]}] (-2,-1)--(-1,-0.75);
\draw [thick, arrows = {-Stealth[reversed, reversed]}] (0,-0.5)--(1,-0.75);
\draw [thick, arrows = {-Stealth[reversed, reversed]}] (2,1)--(1,0.25);
\draw [thick, arrows = {-Stealth[reversed, reversed]}] (2,-1)--(1,-0.25);
\draw [thick, arrows = {-Stealth[reversed, reversed]}] (0,0.5)--(0,0);
\end{tikzpicture}
\end{center}\caption{$D^*$}\label{fig:D}
\end{figure}

\begin{prop}
There is an infinite number of connected digraphs without a kernel but with a good quasi-kernel.
\end{prop}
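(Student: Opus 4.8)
The plan is to exhibit a single small digraph $D^*$ (already drawn in Figure~\ref{fig:D}) that has a good quasi-kernel but no kernel, and then turn it into an infinite family by a composition trick so that the ``no kernel'' and ``good quasi-kernel'' properties are both preserved. First I would verify the base case directly on the $6$-vertex digraph $D^*$. Reading off the arcs from the figure, I would identify a specific independent set --- it looks like $\{2,5\}$ or a similarly placed pair --- check that it is independent, that every other vertex reaches it by a path of one or two arcs (so it is a quasi-kernel), and that from each of its two vertices there is an arc to some in-neighbour of the set (so it is \emph{good} in the sense defined just before Figure~\ref{fig:D}). Then I would argue $D^*$ has no kernel at all: a kernel would have to be a maximal independent set that is also \emph{absorbing} (every outside vertex has an arc \emph{into} it), and by a short case analysis on which of the two triangles $\{1,2,3\}$ and $\{4,5,6\}$ contribute vertices, every independent set fails the absorbing condition. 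Connectivity of $D^*$ is clear from the figure.

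Next I would build the infinite family. The cleanest way is to use the composition $H=T[D_1,\dots,D_t]$ defined in the excerpt: take $T=D^*$ (or, more simply, replace one chosen vertex of $D^*$ by an independent set of arbitrary size $k$, i.e. substitute the edgeless digraph $\overline{K_k}$ into that vertex and singletons everywhere else). Call the resulting digraph $D^*_k$. I would then check two things. (i) $D^*_k$ has a good quasi-kernel: one can lift the good quasi-kernel $Q$ of $D^*$ by replacing a vertex of $Q$ that sits in a blown-up position by the whole independent blob, or --- if the blown-up vertex is not in $Q$ --- keeping $Q$ itself; in either case independence is preserved because the substituted digraph is edgeless, reachability in one/two steps is inherited from $D^*$ since composition preserves the arc relation ``coordinatewise'', and the good-ness condition (an out-arc from each $Q$-vertex to an in-neighbour of $Q$) lifts because out-neighbourhoods and in-neighbourhoods only get larger. (ii) $D^*_k$ still has no kernel: any kernel $K$ of $D^*_k$ would project to a kernel-like object of $D^*$ --- more precisely, for each original vertex $u_i$, $K$ either contains all copies of $u_i$ or none (since the copies form an independent dominating-from-one-vertex situation), so the set of original indices used by $K$ would be a kernel of $D^*$, contradicting the base case; a small amount of care is needed here because a kernel could in principle take a proper nonempty subset of a blown-up independent set, and I would rule this out by noting such a subset would leave a copy of $u_i$ outside $K$ with no arc into $K$ (its only out-neighbours are the out-neighbours inherited from $u_i$, none of which is a sibling copy).

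The step I expect to be the main obstacle is (ii) --- the careful proof that no kernel survives the blow-up, in particular handling the possibility of a kernel slicing through a substituted independent set rather than taking it whole or empty, and making sure the ``projection to $D^*$'' argument is airtight about absorbingness (not just independence). Everything else is routine verification on a fixed small graph or a direct consequence of the definition of composition. If the slicing case proves annoying, an alternative is to substitute into a vertex that is a \emph{source} or is forced, by the structure of $D^*$, to be entirely in or entirely out of any maximal independent set, which sidesteps the issue; I would pick the substitution vertex of $D^*$ with this in mind. I would close by stating that $\{D^*_k : k\ge 1\}$ is the desired infinite family of connected digraphs with a good quasi-kernel and no kernel.
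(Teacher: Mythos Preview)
Your plan is essentially the paper's own proof: verify the base digraph $D^*$ directly, then use composition to produce an infinite family, projecting a hypothetical kernel of the blow-up back to a kernel of $D^*$ and lifting the good quasi-kernel forward. Two small corrections. First, $\{2,5\}$ is not the right pair: there is an arc $2\to 5$ in $D^*$, so it is not even independent; the paper's good quasi-kernel is $\{3,6\}$, with $N^-(\{3,6\})=\{2,5\}$ and $N^+(3)\ni 5$, $N^+(6)\ni 2$. Second, your ``slicing'' worry in step (ii) is unnecessary. The paper simply sets $K^*=\{\,i: K'\cap V(H_i)\ne\emptyset\,\}$ and checks directly that $K^*$ is a kernel of $D^*$: independence is immediate (an arc $i\to j$ in $D^*$ with both $i,j\in K^*$ yields an arc inside $K'$), and absorption follows because any $i\notin K^*$ has $K'\cap V(H_i)=\emptyset$, so a vertex of $H_i$ must shoot into some $H_j$ with $j\in K^*$. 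No all-or-none statement about blown-up blobs is needed.
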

\begin{proof}
In the first part of the proof, we show that the digraph $D^*$ depicted in Figure \ref{fig:D} has no kernels, but  contains a good quasi-kernel. In the second part of the proof, we obtain from $D^*$ an infinite family of connected digraphs without a kernel but with a good quasi-kernel.

Note that if an independent set $K$ is not a kernel, then every subset of $K$ is not either. Thus, we only need to show that all maximal independent sets of $D^*$ are not kernels. Since $\{1,2,3\}$ and $\{4,5,6\}$ induce triangles, every maximal independent set of $D^*$ contains at most one vertex from each of these two sets.  Observe that there are six maximal independent sets in $D^*$: $\{1,4\}$, $\{2,4\}$, $\{3,4\}$, $\{1,5\}$, $\{1,6\}$, $\{3,6\}$.
Each of the six sets is not a kernel since it has a vertex in the second in-neighbourhood: 2, 3, 1, 6, 2, 1, respectively. Observe that $\{3,6\}$ is a good quasi-kernel since $\{3,6\}$ is a quasi-kernel ($N^-(\{3,6\})=\{2,5\}$ and
$N^{--}(\{3,6\})=\{1,4\}$)  and $N^+(3)=\{5\}$, $N^+(6)=\{2\}$.

Construct a composition $D'=D^*[H_1,\dots,H_6]$ from $D^*$ by substituting each vertex $i$ by $H_i$ where $H_i$ is arbitrary for $i=1,2,\dots,6$. We claim that each $D'$ is the desired digraph. Suppose that $D'$ has a kernel $K'$. Construct a set $K^*$ in $D^*$ by putting $i$ in $K^*$ if and only if $K'$ intersects $H_i$. Observe that $K^*$ is a kernel in $D^*,$
which is a contradiction as $D^*$ has no kernel. Thus, $D'$ has no kernel.
Let $Q_3$ be a quasi-kernel of $H_3$ and $Q_6$ a quasi-kernel of $H_6$; then $Q_3\cup Q_6$ is a good quasi-kernel of $D'$ since $\{3,6\}$ is a good quasi-kernel of $D^*$.
\end{proof}
Unfortunately, there is an infinite number of digraphs which have no good quasi-kernel. Observe that the directed cycle $C_3$ with three vertices has no good quasi-kernel and that arbitrary $C_3[H_1,H_2,H_3],$ where $H_1,H_2,H_3$ are digraphs with no arcs, has no good quasi-kernel.

\begin{prop}\label{composition1}
Let $T$ be a digraph with $m\ge 2$ vertices and with a good quasi-kernel and $H_1,H_2,\dots,H _m$ arbitrary digraphs. Then
the composition $D=T[H_1,H_2,\dots,H _m]$ has a good quasi-kernel. Thus, $D$ has a small quasi-kernel.
\end{prop}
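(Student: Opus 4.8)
The plan is to show that a good quasi-kernel of $T$ lifts to a good quasi-kernel of the composition $D=T[H_1,\dots,H_m]$, after which the ``small quasi-kernel'' conclusion follows immediately from Theorem~\ref{T1}. Let $Q_T$ be a good quasi-kernel of $T$; without loss of generality write $Q_T=\{u_{i}\colon i\in I\}$ for some $I\subseteq[m]$. For each $i\in I$ pick an arbitrary quasi-kernel $Q_i$ of $H_i$ (which exists by Chv\'atal--Lov\'asz), and set $Q=\bigcup_{i\in I}Q_i$, i.e.\ the union of these quasi-kernels sitting inside the blobs indexed by $Q_T$. The bulk of the work is to verify three things: (i) $Q$ is independent in $D$; (ii) $Q$ is a quasi-kernel of $D$; and (iii) $Q$ is \emph{good}.

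For (i), independence: two vertices of $Q$ in the same blob $H_i$ are non-adjacent because $Q_i$ is independent in $H_i$, and two vertices in distinct blobs $H_i,H_j$ with $i,j\in I$ are non-adjacent because there is no arc $u_iu_j$ or $u_ju_i$ in $T$ (as $Q_T$ is independent), and arcs of $D$ between distinct blobs exist only along arcs of $T$. For (ii), the quasi-kernel property: take $v\in V(D)\setminus Q$ lying in blob $H_k$. If $k\in I$, then since $Q_k$ is a quasi-kernel of $H_k$, $v$ reaches $Q_k\subseteq Q$ by a path of length $1$ or $2$ inside $H_k$, hence inside $D$. If $k\notin I$, then since $Q_T$ is a quasi-kernel of $T$, there is a path $u_k\to u_j$ (length $1$) or $u_k\to u_\ell\to u_j$ (length $2$) in $T$ with $u_j\in Q_T$; lifting this, every vertex of $H_k$ has an arc to every vertex of $H_j$ (resp.\ via an arbitrary vertex of $H_\ell$), so $v$ reaches any vertex of $Q_j\subseteq Q$ by a path of length $1$ or $2$. (Here we use $m\ge2$ only insofar as $Q_T$ is nonempty and the out-neighbour structure is meaningful; a sink-free $T$ with a good quasi-kernel automatically has this.)

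For (iii), goodness: we must show each $w\in Q$ has an arc to some vertex that is an in-neighbour of a vertex of $Q$. Say $w\in Q_i$ with $i\in I$. Since $Q_T$ is good, $u_i$ has an arc in $T$ to some vertex $u_p$ which is an in-neighbour (in $T$) of some $u_j\in Q_T$; then $u_p u_j\in A(T)$ with $j\in I$. Pick any vertex $z\in H_p$: then $wz\in A(D)$ (as $u_iu_p\in A(T)$), and $z$ has an arc in $D$ to every vertex of $H_j$, in particular to any vertex of $Q_j\subseteq Q$, so $z\in N^-(Q)$. Hence $w$ has an arc into $N^-(Q)$, as required. (One subtlety: it is conceivable that $p=i$, i.e.\ $u_i$ witnesses its own goodness via a loop-like configuration $u_iu_j$ with $u_i\in N^-(u_j)$; this is handled the same way, taking $z\in H_i$ distinct from $w$ if $w$ itself does not already have an arc to $H_j$, but since arcs between blobs are complete, $w$ does have an arc to all of $H_j$ when $u_iu_j\in A(T)$, so in fact $w\in N^-(Q)$ serves directly — one should just be slightly careful to phrase this so the witness vertex is genuinely an in-neighbour of a \emph{different} element of $Q$, which it is since $Q_j$ is nonempty.)

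Once $Q$ is shown to be a good quasi-kernel of $D$, the final sentence follows: $D$ is sink-free (each blob's vertex inherits an out-neighbour from the fact that $T$ is sink-free along the good quasi-kernel, and in any case a good quasi-kernel forces sink-freeness on the relevant vertices, but more simply one observes $D$ itself is sink-free because every vertex of every blob has an out-arc), so Theorem~\ref{T1} applies and yields a quasi-kernel of size at most $|V(D)|/2$. The main obstacle I anticipate is purely bookkeeping in step (iii): making sure the goodness witness in $T$ lifts correctly and that the corner case where the witness lands back in the same blob as $w$ is dispatched cleanly; everything else is a routine unwinding of the composition's arc set.
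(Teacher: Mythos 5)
Your proposal is correct and follows essentially the same route as the paper: take a quasi-kernel $Q_i$ of each $H_i$ indexed by the good quasi-kernel of $T$, take their union, and apply Theorem~\ref{T1}; the paper simply declares the verification "easy" while you spell it out, including the goodness check. (Your worried corner case $p=i$ cannot occur anyway, since $u_p$ is an in-neighbour of a vertex of the independent set $Q_T$ and hence $u_p\notin Q_T$.)
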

\begin{proof}
 Let $Q=\{u_{p_1},\dots,u_{p_s}\}$ be a good quasi-kernel of $T$, $Q_i$ a quasi-kernel of $D_i$ for $i=p_1,\dots , p_s$, and $Q'=\bigcup_{i\in \{p_1,\dots,p_s\}} Q_i.$ It is easy to verify that $Q'$ is a quasi-kernel of $D$ using the fact that $Q$ is a quasi-kernel of $T$. It remains to apply Theorem \ref{T1}.
\end{proof}

\section{One-way Split Digraphs}\label{sec:odsg}

Recall that a one-way split digraph consists of an independent set $X$ and a semicomplete digraph on  $Y$ and arcs from $X$ to $Y$. Before we consider one-way split digraphs let us show the following simple but useful result on quasi-kernels in semicomplete digraphs. Recall that a set $S$ in a family $\cal F$ of sets is {\em maximal} if there is no set $S'$ in $\cal F$ such that $S\subseteq S'.$ 


\begin{lemma} \label{cor:T}
Let $T=(V,A)$ be a semicomplete digraph. Then every vertex $v\in V$ with a maximal in-neighbourhood forms a quasi-kernel of $T$.
\end{lemma}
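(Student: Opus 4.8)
The plan is to show directly that the singleton $\{v\}$ is a quasi-kernel of $T$. Independence is immediate, since a one-element set is trivially independent, so the only thing to verify is that every other vertex reaches $v$ by a directed path with one or two arcs. I would argue by contradiction: suppose some $u\in V\setminus\{v\}$ has no such path to $v$.

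First I would use semicompleteness to locate $u$ relative to $v$. Since there is no arc $uv$ (a path with one arc from $u$ to $v$), and since between $u$ and $v$ there is at least one arc, we must have $vu\in A$; in particular $v\in N^-(u)$. Next I would examine an arbitrary $w\in N^-(v)$. If $uw\in A$, then $u\to w\to v$ would be a path with two arcs from $u$ to $v$, contradicting our assumption; hence $uw\notin A$, and semicompleteness forces $wu\in A$, i.e. $w\in N^-(u)$. Therefore $N^-(v)\subseteq N^-(u)$.

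Finally I would upgrade this to a strict inclusion: we have $v\in N^-(u)$ (from $vu\in A$) but $v\notin N^-(v)$ (there are no loops), so $N^-(v)\cup\{v\}\subseteq N^-(u)$, whence $N^-(v)\subsetneq N^-(u)$. This contradicts the maximality of $N^-(v)$ among in-neighbourhoods of vertices of $T$. Hence no such $u$ exists and $\{v\}$ is a quasi-kernel.

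I do not expect any real obstacle here; the only point to be careful about is the strictness of the inclusion, which is exactly where the vertex $v$ itself (and the absence of loops) is used, and the only place semicompleteness is invoked twice — once to deduce $vu\in A$ and once to deduce $wu\in A$ for each $w\in N^-(v)$.
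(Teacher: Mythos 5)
Your proposal is correct and is essentially the paper's own argument, just phrased contrapositively: the paper uses maximality of $N^-(v)$ directly to extract a vertex $w\in N^-(v)\setminus N^-(u)$ and then builds the path $u\to w\to v$, whereas you assume no short path exists, deduce $N^-(v)\subsetneq N^-(u)$, and contradict maximality — the same two uses of semicompleteness and the same role for $v\in N^-(u)$. No gap; the only implicit point in both versions is that $w\neq u$, which follows since $w\in N^-(v)$ while $uv\notin A$.
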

\begin{proof}
Let $v$ have a maximal in-neighbourhood in $T$ and let $u$ be an arbitrary vertex of $T$ distinct from $v.$ Then either $uv\in A$ or  $v\in N^-(u)$ and there is a vertex $w\in N^-(v)\setminus N^-(u)$. Thus $uw,wv\in A$. Hence, $\{v\}$ is a quasi-kernel of $T.$
\end{proof}

We now show the following theorem for one-way split digraphs.

\begin{thm}\label{thm:split}
	Let $D$ be a one-way split digraph of order $n$ with no sinks. Then $D$ has a quasi-kernel of size at most $\frac{n+3}{2} - \sqrt{n}$.
	Furthermore, for infinitely many values of $n$ there exists a one-way split digraph of order $n$, with no sink, such that the minimum size of its quasi-kernels is $\frac{n+3}{2} - \sqrt{n}$.
\end{thm}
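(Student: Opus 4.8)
The plan is to first turn the minimum quasi-kernel size of $D$ into a clean optimisation over the semicomplete part, then reduce that part to a strongly connected one, and finally to a weighting problem attacked by a single well-chosen vertex. Write $V(D)=X\cup Y$ with $X$ an independent set of sources and $T=D[Y]$ semicomplete, all arcs between $X$ and $Y$ pointing into $Y$; note $T$ is itself sink-free, since a $Y$-vertex with no out-neighbour would be a sink of $D$. Because $T$ is semicomplete an independent set meets $Y$ in at most one vertex, and because $Y$-vertices reach no vertex of $X$ while $Y\neq\emptyset$, every quasi-kernel $Q$ has $|Q\cap Y|=1$, say $Q\cap Y=\{v\}$. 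A two-arc walk starting in $Y$ stays in $Y$, so ``every vertex of $Y$ reaches $Q$ within two arcs'' is equivalent to ``$\{v\}$ is a quasi-kernel of $T$''; call such a $v$ a \emph{centre} (centres exist by Lemma~\ref{cor:T}). Writing $C_v=Y\setminus N^-_T[v]$ (the strict out-neighbours of $v$) and $X_0(v)=\{x\in X:N^+(x)\subseteq C_v\}$, one checks that $\{v\}\cup X_0(v)$ is a quasi-kernel and that any quasi-kernel $Q$ with $Q\cap Y=\{v\}$ contains $X_0(v)$: an $x\in X_0(v)$ cannot reach $Q\cap X$ and therefore would have to reach $v$ itself within two arcs, which its out-neighbourhood forbids. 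Hence the minimum quasi-kernel size of $D$ equals $1+\min\{|X_0(v)|:v\text{ a centre}\}$, and it suffices to produce a centre $v$ with $|X_0(v)|\le \frac{n+1}{2}-\sqrt n=\frac{(\sqrt n-1)^2}{2}$.

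Two reductions make the problem sharper. First, let $S$ be the terminal strong component of $T$: every centre lies in $S$, each $C_v$ (for $v$ a centre) is contained in $S$, and every source with an out-neighbour outside $S$ is killed by \emph{every} centre. Deleting $Y\setminus S$ and those sources leaves a sink-free one-way split digraph $D'$ of order $n'\le n$ with $D'[Y']=T[S]$ strongly connected and with exactly the same minimum quasi-kernel size (the centres and the relevant $C_v$'s and $X_0(v)$'s are unchanged); since $t\mapsto\frac{t+3}{2}-\sqrt t$ is non-decreasing for $t\ge 1$, it is enough to handle $D'$, so I assume $T$ is strongly connected. Second, replacing each $N^+(x)$ by a single out-neighbour $y_x$ only enlarges every $X_0(v)$, hence only increases the minimum quasi-kernel size, so I may assume every source has out-degree one. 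With $m=|Y|$, $n_y=|\{x:y_x=y\}|$ and $p=\sum_yn_y=n-m$, the task becomes: in a strongly connected sink-free semicomplete digraph $T$ on $m$ vertices with centre set $K$, find $v\in K$ with $\sum_{y\in C_v}n_y\le\frac{(\sqrt n-1)^2}{2}$.

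The crux is the choice of centre, and this is where I expect the real difficulty. Averaging over $K$ with $\kappa(y):=|\{v\in K:y\in C_v\}|$ gives $\sum_{v\in K}\sum_{y\in C_v}n_y=\sum_yn_y\,\kappa(y)$, so some centre $v$ satisfies $\sum_{y\in C_v}n_y\le\frac1{|K|}\sum_yn_y\,\kappa(y)$; pushing this via linear programming duality, the best bound obtainable is $p\cdot\theta(T)$ where $\theta(T)=\min_\mu\max_{y\in Y}\sum_{v\in K:\,y\in C_v}\mu_v$ over probability distributions $\mu$ on $K$ — the least possible maximum $\mu$-mass of centres strictly beating a single vertex. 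A one-variable optimisation shows $p\cdot\theta(T)\le\frac{(\sqrt{m+p}-1)^2}{2}$ for all $p\ge0$ if and only if $\theta(T)\le\frac{m-1}{2m}$, so the whole bound reduces to producing a distribution $\mu$ on $K$ with $\mu(\{v\in K:y\to v\text{ strictly}\})\le\frac{m-1}{2m}$ for every $y\in Y$. For a regular tournament $K=V(T)$ and the uniform $\mu$ achieves this with equality, which is precisely why the bound is tight; establishing it in general is the main obstacle, and I expect it to rest on the structure of the ``king'' (centre) set of a semicomplete digraph, in particular on bounding how much of that set a single vertex can strictly dominate. A fallback is an inductive version: peel off a maximum-in-degree centre $v^{*}$ (which automatically has $|C_{v^{*}}|\le\frac{m-1}{2}$), bound the $n_y$-mass it fails to kill, and recurse on $D[C_{v^{*}}\cup X_0(v^{*})]$ after its own terminal-component reduction.

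For the sharpness statement, take $n=k^2$ with $k\ge3$ odd, giving infinitely many $n$. Let $Y$ be a regular tournament on $k$ vertices (each vertex has out-degree $\frac{k-1}{2}$, and in a regular tournament every vertex reaches every other within two arcs, so every vertex is a centre), and let $X$ consist of $k(k-1)$ sources split into $k$ blocks of size $k-1$, with all sources of the $j$-th block pointing to the $j$-th vertex of $Y$; then $D$ is sink-free and one-way split, of order $k+k(k-1)=n$. For every $v\in Y$ we get $|C_v|=\frac{k-1}{2}$ and $|X_0(v)|=(k-1)\cdot\frac{k-1}{2}=\frac{(k-1)^2}{2}$, an integer since $k$ is odd. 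By the formula above the minimum quasi-kernel size is $1+\frac{(k-1)^2}{2}=\frac{k^2+3}{2}-k=\frac{n+3}{2}-\sqrt n$, as required.
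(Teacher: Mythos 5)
Your framework is largely sound: the characterisation of quasi-kernels of a one-way split digraph as $\{v\}\cup X_0(v)$ for a centre $v$ of the semicomplete part (so that the minimum size is $1+\min_v|X_0(v)|$), the reduction to the terminal strong component and to sources of out-degree one, the observation that a bound of the form $1+p\cdot\frac{m-1}{2m}$ (with $m=|Y|$, $p=|X|$) implies $\frac{n+3}{2}-\sqrt n$ by AM--GM, and the sharpness construction (which is exactly the paper's family $D_k$: a regular tournament with $|Y|-1$ private sources per tournament vertex) are all correct. (A small slip: $y\in C_v$ means $v$ strictly beats $y$, not ``$y\to v$ strictly''.) But the theorem is not proved, because the one step that carries all the weight is explicitly left open: your LP reformulation reduces everything to the claim that the centre set $K$ admits a probability distribution $\mu$ with $\mu(\{v\in K: y\in C_v\})\le\frac{m-1}{2m}$ for every $y\in Y$ (equivalently, for every nonnegative weighting of $Y$ some centre's strict out-neighbourhood carries at most a $\frac{m-1}{2m}$ fraction of the total weight), and you say yourself that ``establishing it in general is the main obstacle,'' offering only a hope about the structure of king sets and an unworked inductive fallback. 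Identifying the right quantity is not the same as bounding it; as written the upper-bound half of the theorem rests on an unproven lemma.

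The claim you need is in fact true, and the paper proves precisely a weighted-counting version of it by a concrete device you could adopt: fix one out-neighbour $v(x)$ for each $x\in X$, let $R(y)=\{x:v(x)=y\}$, and build an auxiliary semicomplete digraph $H$ on $X$ with $x_1x_2\in A(H)$ whenever $v(x_1)=v(x_2)$ or $v(x_1)v(x_2)\in A(D)$. Counting arcs, $|A(H)|\ge\binom{|X|}{2}+\sum_{y}\binom{|R(y)|}{2}$, so by averaging and Jensen some $x$ has $d_H^-(x)\ge\frac{|X|-1}{2}+\frac{|X|/|Y|-1}{2}$. Choosing a centre $y$ with $N^-[v(x)]\cap Y\subseteq N^-[y]\cap Y$ and $|N^-[y]\cap Y|$ maximal (a single such vertex is a quasi-kernel of the semicomplete part by Lemma~\ref{cor:T}), every in-neighbour of $x$ in $H$ lies within distance two of $y$ in $D$, hence outside $Q=\{y\}\cup\bigl(X\setminus(N^-[y]\cup N^{--}[y])\bigr)$; this gives $|Q|\le 1+\frac{|X|}{2}-\frac{|X|/|Y|}{2}$, which is exactly your target $1+p\cdot\frac{m-1}{2m}$, and the optimisation over $|Y|$ finishes the proof without any need for the strong-connectivity or out-degree-one reductions. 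So the two approaches agree on the endgame and on the extremal example, but your proposal is missing the averaging argument (or any substitute) that actually produces the good centre.
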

\begin{proof}
	Let $D$ be a one-way split digraph of order $n$ with no sink. Let $X$ and $Y$ be a partition of $V(D)$ such that $X$ is independent and $Y$ induces a semicomplete digraph and
	all arcs between $X$ and $Y$ go from $X$ to $Y$.
	Note that all quasi-kernels of $D$ consist of a single vertex from $Y$ and the vertices of $X$ at distance $3$ from this vertex. We will now show that there exists a vertex $y$ such that the number of vertices at distance $3$ is small (and in particular we could choose $y$ to be a vertex with the most vertices of $X$ at distance at most $2$ that has a maximal in-neighbourhood in $Y$.)
	
	 For each $x \in X$ let $v(x)$ be any vertex in $N^+(x)$ and for each $y \in Y$ let
	$R(y) = \{ x\in X\; | \; v(x)=y\}$.  We now define a new digraph $H$ as follows. Let $V(H)=X$ and define the arc-set of $H$ as follows.
	
	\[
	A(H) = \{ x_1x_2 \; | \;  v(x_1)=v(x_2) \mbox{ or } v(x_1) v(x_2) \in A(D) \}
	\]
	
	Note that if $x_1,x_2 \in R(y)$ for some $y$ then $x_1 x_2, x_2x_1 \in A(H)$. Furthermore $H$ is a semicomplete digraph as for any $x_1,x_2 \in X$ we either have
	$v(x_1)=v(x_2)$ or $v(x_1) v(x_2) \in A(D)$ or $v(x_1) v(x_2) \in A(D)$ (as $Y$ induces a semicomplete digraph). This implies the following.
	
	\[
	|A(H)| \geq {|X| \choose 2} + \sum_{y \in Y} {|R(y)| \choose 2}
	\]
	
	This implies that there exists a vertex $x \in X$ such that the following holds.
	
	\[
	d_H^-(x) \geq \frac{1}{|X|} \left( \frac{|X|(|X|-1)}{2} + \sum_{y \in Y} \frac{|R(y)| (|R(y)| -1)}{2} \right)
	\]
	
	Given the size of $X$ and $Y$, by Jensen's inequality the sum  $\sum_{y \in Y} \frac{|R(y)| (|R(y)| -1)}{2}$ is minimized when all $|R(y)|$ are the same size (we allow $|R(y)|$ not to be an integer).
	That is, $|R(y)|=|X|/|Y|$. This implies that the following holds.
	
	\[
	\begin{array}{rcl} \vspace{0.3cm}
	d_H^-(x) & \geq &  \frac{1}{|X|} \left( \frac{|X|(|X|-1)}{2} + \sum_{y \in Y} \frac{|R(y)| (|R(y)| -1)}{2} \right) \\ \vspace{0.3cm}
	& \geq & \frac{1}{|X|} \left( \frac{|X|(|X|-1)}{2} + \sum_{y \in Y} \frac{|X|/|Y| (|X|/|Y| -1)}{2} \right) \\ \vspace{0.3cm}
	& = &  \frac{|X|-1}{2} + \frac{|X|/|Y| -1}{2} \\
	\end{array}
	\]
	
	Let $y \in Y$ be defined such that $N_D^-[v(x)] \cap Y \subseteq N_D^-[y] \cap Y$ and $|N_D^-[y] \cap Y|$ is maximum (such a $y$ exists as $y=v(x)$ is an option).
	Define $Q$ as follows.

\[   Q = \{y\} \cup \left( X \setminus \left(N_D^-[y] \cup N_D^{--}[y] \right) \right) \]

	Note that $Q$ consists of $y\in Y$ and all vertices in $X$ at distance at least $3$ from $y$. As we chose $y$ to be a vertex with maximal in-neighbourhood in $Y$, by Lemma~\ref{cor:T}, $Q$ is a quasi-kernel in $D$. We will show that $Q$ has the desired size.

   Let $u$ be a vertex in $X$ such that $ux \in A(H)$. First consider the case when $v(u)=v(x)$, in which case $u \in R(v(x))$ and
        therefore $u \in N_D^-[v(x)]$, which implies that $u \not\in Q$ as $v(x)=y$ or $v(x)y \in A(D)$.

        So now consider the case when $v(u) \not= v(x)$. In this case $v(u) v(x) \in A(D)$, which implies that $v(u) \in  N_D^-[v(x)] \cap Y \subseteq N_D^-[y] \cap Y$. Therefore $u (v(u)) y$ is a path of length two in $D$, implying that $u \not\in Q$.  So, in all cases when  $ux \in A(H)$ we observe that
$u \not\in Q$.

	By our bound on $d_H^-(x)$ above, we note that the following holds.

	\[
	|Q|  \leq  1 + |X| - \left(  \frac{|X|-1}{2} + \frac{|X|/|Y| -1}{2} +1 \right)
	=  \frac{|X|}{2} - \frac{|X|/|Y|}{2} + 1
	\]
	
	As $n$ is the order of $D$ we note that $n=|X|+|Y|$, which implies the following.
	
	\[
	|Q| \leq  \frac{n-|Y|}{2} - \frac{(n-|Y|)/|Y|}{2} + 1 =  \frac{n}{2} - \frac{|Y|}{2} - \frac{n}{2|Y|} + \frac{1}{2} + 1 = \frac{n+3}{2} - \left( \frac{|Y|}{2} + \frac{n}{2|Y|} \right)
	\]
	
	Let $t>0$ and $f(t) = t/2 + n/(2t).$ Then $f'(t) = 1/2 - n/(2t^2)$, which implies that $f(t)$ takes its minimum when $f'(t)=0$ which in turn is when $t^2 = n$.
	This implies that

	\[
	|Q| \leq \frac{n+3}{2} - \left( \frac{|Y|}{2} + \frac{n}{2|Y|} \right) \leq \frac{n+3}{2} - \left( \frac{\sqrt{n}}{2} + \frac{n}{2\sqrt{n}} \right) = \frac{n+3}{2} - \sqrt{n}
	\]
	
	As $Q$ was a quasi-kernel in $D$ this completes the first part of the proof.

	In order to prove the second part of the theorem we let $k \geq 1$ be any integer and construct the digraph $D_k$ of order $(2k+1)^2$ as follows.
	Let $T$ be a $k$-regular tournament of order $2k+1$ and for each vertex, $v$, of $T$ add $2k$ new vertices, $V_v$, with arcs into $v$. The resulting
	digraph, $D_k$, has order $(2k+1)^2$ and is a one-way split digraph with partition $V(T)$ (the tournament) and $V(D_k) \setminus V(T)$ (the independent set).
	Let $Q$ be a minimum quasi-kernel in $D_k$.
	
	 Since there is no arc from $V(T)$ to $V(D_k)\backslash V(T)$ we may assume that $Q \cap V(T) \not= \emptyset$. As $T$ is a tournament $|Q \cap V(T)|=1$ and let $v$ be the vertex in $Q \cap V(T)$.  As $T$ is a $k$-regular tournament we note that $|N^+(v)|=k$ and $V_x \subseteq Q$ for all $x \in N^+(v)$.  This implies that $|Q| \geq 1 + k(2k) = 2k^2+1$.
	So in all cases $|Q| \geq 2k^2+1$, which implies the following.
	
	\[
	|Q| \geq 2k^2+1 =  \frac{4k^2 + 4k +1}{2} - \frac{4k+2}{2} + \frac{3}{2} = \frac{n}{2} - \sqrt{n} + \frac{3}{2}
	\]
	
	By the first part of the theorem we have $|Q| \leq \frac{n+3}{2} - \sqrt{n}$, which implies that $|Q| = \frac{n+3}{2} - \sqrt{n}$.
\end{proof}

\begin{thebibliography}{111111}
\bibitem{BG}
J. Bang-Jensen and G. Gutin, Digraphs: Theory, Algorithms and Applications, 2nd Edition, Springer, London, 2009.
\bibitem{DiClasses}
J. Bang-Jensen and G. Gutin (eds.), Classes of Directed Graphs, Springer, London, 2018.
\bibitem{CS} M. Chudnovsky and P. Seymour, The structure of claw-free graphs, in: Surveys in Combinatorics 2005, in: London Math.
Soc. Lecture Note Ser., vol.~327, 2005,  153-171.
\bibitem{CL}
V. Chv\'{a}tal and L. Lov\'{a}sz, Every digraph has a semi-kernel, in: Lecture Notes in Mathematics. 411 (1974), 175-175.
\bibitem{C} C. Croitoru, A note on quasi-kernels in digraphs, Information Processing Letters. 115 (2015)  no.~11, 863--865.
\bibitem{ES}
Small quasi-kernels in directed graphs, \url{http://lemon.cs.elte.hu/egres/open/Small_quasi-kernels_in_directed_graphs}.
\bibitem{FFR}
R. Faudree, E. Flandrin and Z. Ryj\'{a}\v{c}ek, Claw-free graphs -- A survey, Discrete Mathematics, 164 (1997), 87-- 47.
\bibitem{Fulkerson} D.R. Fulkerson, Zero-one matrices with zero trace, Pacific Journal of Mathematics, 10   (1960) no.~3, 831-836.
\bibitem{GKTY}
G. Gutin, K. M. Koh, E. G. Tay and A. Yeo, On the number of quasi-kernels in digraphs,
J. Graph Theory 46 (2004), 48-56.
\bibitem{HH}
S. Heard and J. Huang, Disjoint quasi-kernels in digraphs, J. Graph Theory 58  (2008), 251-260.
\bibitem{H} A. van Hulst, Kernels and Small Quasi-Kernels in Digraphs, arXiv:2110.00789, 2021.
 \bibitem{JM} H. Jacob and H. Meyniel, About quasi-kernels in a digraph, Discrete Math. 154 (1996) no.~1-3, 279-280.
\bibitem{KLS}  A. Kostochka, R. Luo and S. Shan, Towards the Small Quasi-Kernel Conjecture, arXiv:2001.04003, 2020.

\bibitem{LaMar} M. D. LaMar, Split digraphs, Discrete Math. 312 (2012)  no.~7, 1314–1325.

 \bibitem{LMRV} H. Langlois, F. Meunier, R. Rizzi and S. Vialette,
Algorithmic aspects of quasi-kernels, arXiv:2107.03793, 2021.

\bibitem{MP} N.V.R. Mahadev and U.N. Peled, Threshold Graphs and Related Topics, first ed., in: Annals of Discrete Mathematics, vol.~56, North Holland, 1995.

\bibitem{R} M. Richardson, Solution of irrefiective relations, Ann. Math. 58 (1953), 573-580.

\end {thebibliography}

\end{document}